\newcommand{\beq}{\begin{equation}}
\newcommand{\eeq}{\end{equation}}
\newcommand{\bea}{\begin{eqnarray}}
\newcommand{\eea}{\end{eqnarray}}
\newcommand{\beas}{\begin{eqnarray*}}
\newcommand{\eeas}{\end{eqnarray*}}
\newtheorem{theorem}{Theorem}[section]
\newtheorem{definition}[theorem]{Definition}
\newtheorem{proposition}[theorem]{Proposition}
\newtheorem{remark}[theorem]{Remark}
\newtheorem{example}[theorem]{Example}
\newtheorem{examples}[theorem]{Examples}
\newtheorem{foo}[theorem]{Remarks}
\newenvironment{proof}{\addvspace{\medskipamount}\par\noindent{\it
Proof}.}
{\unskip\nobreak\hfill$\Box$\par\addvspace{\medskipamount}}
\title{Stochastic differential equations driven by loops}
\author{Fabrice Baudoin}
\date{Department of Mathematics, Purdue University \\
 West Lafayette, IN, USA}
\begin{document}
 \maketitle
 
\begin{abstract}
We study stochastic differential equations of the type
\begin{equation*}
X_t =x + \sum_{i=1}^d \int_0^t V_i (X_s^x) \circ dM^i_s, \text{ }
0 \leq t \leq T,
\end{equation*}
where $(M_s)_{0 \leq s \leq T}$ is a semimartingale generating a
loop in the free Carnot group of step $N$ and show how the
properties of the random variable $X_T^x$ are closely related to
the Lie subalgebra generated by the commutators of the $V_i$'s
with length greater than $N+1$. It is furthermore shown that if
$f$ is a smooth function, then
\[
\lim_{T\rightarrow
0}\frac{\mathbb{E}(f(X_T^x))-f(x)}{T^{N+1}}=(\Delta _{N}f)(x),
\]
where $\Delta _{N}$ is a second order operator related to the
$V_i's$.

\

\noindent \textbf{Keywords:} Brownian loops, Carnot groups, Chen
development, Holonomy operator, H\"ormander's type theorems.

\end{abstract}

\maketitle

\baselineskip 0.25in

\tableofcontents

\section{Introduction}

Let us consider a stochastic differential equations on
$\mathbb{R}^n$ on the type
\begin{equation}  \label{SDE intro}
X_t^{x_0} =x_0 + \sum_{i=1}^d \int_0^t V_i (X_s^{x_0}) \circ
dM^i_s, \text{ } 0 \leq t \leq T,
\end{equation}
where:

\begin{enumerate}
\item  $x_{0}\in \mathbb{R}^{n}$;

\item  $V_{1},...,V_{d}$ are $C^{\infty }$ bounded vector fields on $\mathbb{R}^{n}$;

\item  $\circ $ denotes Stratonovitch integration;

\item  $(M_t)_{0\leq t\leq T}=(M_{t}^{1},...,M_{t}^{d})_{0\leq t\leq T}$ is a
$d$-dimensional continuous semimartingale.
\end{enumerate}
It is well-known that if $(M_t)_{0\leq t\leq T}$ is a standard
Brownian motion then for any smooth function
$f:\mathbb{R}^{n}\rightarrow \mathbb{R}$ we have in
$\mathbf{L}^{2}$,
\[
\lim_{t\rightarrow 0}\frac{\mathbf{P}_{t}f-f}{t}=\frac{1}{2}
\left( \sum_{i=1}^d V_i^2 \right) f,
\]
where $\mathbf{P}_{t}$ is the semigroup associated with (\ref{SDE
intro}) which is defined by
\[
\mathbf{P}_{t}f(x)=\mathbb{E}\left( f(X_{t}^{x})\right).
\]
In that case, it is furthermore known from H\"ormander's theorem
that $\mathbf{P}_{t}$ has a smooth transition kernel with respect
to the Lebesgue measure as soon as for all $x_{0}\in
\mathbb{R}^{n}$ , $\mathfrak{L} (x_0)=\mathbb{R}^{n}$ where
$\mathfrak{L}$ is the Lie algebra generated by the vector fields
$V_i$'s. So, when $(M_t)_{0\leq t\leq T}$ is a Brownian motion,
the properties of $(X_t^{x_0})_{0 \leq t \leq T}$ are closely
related to the diffusion operator $ \sum_{i=1}^d V_i^2 $ and the Lie algebra $\mathfrak{L}$.

In this paper, we show  that there are other choices of  the driving  semimartingale $(M_t)_{0\leq t\leq T}$ for which the solution $(X_t^{x_0})_{t \ge 0}$ is naturally associated to other diffusion operators.

Let us roughly describe our approach. The Chen-Strichartz
expansion theorem (see \cite{Baubook}, \cite{Che}, \cite{Stri})
states that, formally, the stochastic flow $(\Phi_t)_{0 \leq t
\leq T}$ associated with the stochastic differential equations
(\ref{SDE intro}) can be written as 
\[
\Phi_{t}=\exp\left(
\sum_{k\geq1}\sum_{i_{1},...,i_{k}\in\{1,...,d\}}
F_{i_{1},...,i_{k}}\left( \int_{0\leq t_{1}\leq\dots\leq t_{k}\leq
t}\circ dM_{t_{1}}^{i_{1}}\dots\circ dM_{t_{k}}^{i_{k}}\right)
_{anti}\right) , \text{ } t \leq T,
\]
where $F_{i_{1},...,i_{k}}$ are universal Lie polynomials in
$V_{1},...,V_{d} $, which depend on the choice of a Hall basis in
the free Lie algebra with $d$ generators, and $\left( \int_{0\leq
t_{1}\leq\dots\leq t_{k}\leq t}\circ dM_{t_{1} }^{i_{1}}\dots\circ
dM_{t_{k}}^{i_{k}}\right) _{anti}$ are universal antisymmetrizations of the iterated integrals of the semimartingale $(M_t)_{t \ge 0}$.

Consider now $N \geq 0$, and take for $(M^1_{t},...,M^d_{t})_{ 0
\leq t \leq T}$ a $d$-dimensional standard Brownian motion
conditioned by
\[
\left( \int_{0\leq t_{1}\leq\dots\leq t_{k}\leq T}\circ dM_{t_{1}
}^{i_{1}}\dots\circ dM_{t_{k}}^{i_{k}}\right) _{anti}=0, \text{ }
i_{1},...,i_{k}\in\{1,...,d\},\text{ }1 \leq k \leq N.
\]
It is shown that such a process, that we call a $N$-step Brownian
loop, is a semimartingale up to time $T$ and can be constructed
from a diffusion in the loop space over the free Carnot group of
step $N$.

For this choice of $(M_t)_{t \ge 0}$,  the Chen development for $\Phi_T$ writes
\[
\Phi_{T}=\exp\left(
\sum_{k\geq N+1}\sum_{i_{1},...,i_{k}\in\{1,...,d\}}
F_{i_{1},...,i_{k}}\left( \int_{0\leq t_{1}\leq\dots\leq t_{k}\leq
t}\circ dM_{t_{1}}^{i_{1}}\dots\circ dM_{t_{k}}^{i_{k}}\right)
_{anti}\right),
\]
and thus only involves the
Lie subalgebra $\mathfrak{L}^{N+1}$, where $\mathfrak{L}$ is the
Lie algebra generated by the vector fields $V_i$'s and for $p \geq
2$, $\mathfrak{L}^p$ is inductively defined by
\[
\mathfrak{L}^p = \{ [X,Y], \text{ } X \in \mathfrak{L}^{p-1}, Y \in %
\mathfrak{L} \},\text{  }\mathfrak{L}^1=\mathfrak{L}.
\]
Hence, we can expect that the properties of the random variable
$X_T^x$, where $(X_t^x)_{0 \leq t \leq T}$ is the solution of
(\ref{SDE intro}) with initial condition $x$, are closely related
to this Lie subalgebra $ \mathfrak{L}^{N+1}$.

Precisely, we show that:

\begin{itemize}

\item  If $f:\mathbb{R}^{n}\rightarrow \mathbb{R}$ is a smooth function which is compactly supported,
then in $L^{2}$,
\[
\lim_{T\rightarrow 0}\frac{\mathcal{H}_{T}^{N}f-f}{T^{N+1}}=\Delta
_{N}f,
\]
where $\Delta _{N}$ is an homogeneous second order differential
operator that belongs to the universal enveloping algebra of
$\mathfrak{L}^{N+1}$ and $\mathcal{H}_{T}^{N}$ the $N$-step
holonomy operator that we define by
\[
\mathcal{H}_{T}^{N+1}f(x)=\mathbb{E}\left( f(X_{T}^{x})\right).
\]

\item  If $\mathfrak{L}^{N+1}=0$, then for all $x\in \mathbb{R}^{n}$, almost
surely $X_{T}^{x}=x$;

\item  If for all $x\in \mathbb{R}^{n}$, $\mathfrak{L}^{N+1}(x)=\mathbb{R}
^{n}$, then for all $x\in \mathbb{R}^{n}$, $X_{T}^{x}$ has a
smooth density $ p_{T}(x)$ with respect to the Lebesgue measure.
Moreover, in that case $ p_{T}(x)\sim _{T\rightarrow
0}m(x)T^{-\frac{D(x)}{2}}$, where $m$ is a smooth non negative
function and $D(x)$ an integer (the graded dimension of $
\mathfrak{L}^{N+1}(x)$);

\end{itemize}
We stress the fact that from a geometrical point of view the
family of operators $(\Delta_N)_{N \geq 0}$ is quite interesting
and is an important invariant of the intrinsic geometry of the
differential system generated by the $V_i$'s. For instance we
shall see that, for some positive constant $C$,
\[
\Delta_1= C \sum_{i,j=1}^d [V_i ,V_j ]^2
\]
and, in a way, this operator sharply measures the curvature of the
differential system generated by the vector fields $V_i$.

Let us mention that, since the seminal work of Rotschild and Stein
\cite{Ro-St}, the study  Carnot groups arise naturally in PDE's
theory and is now an active research field . Our interest in SDE's
driven by loops in Carnot groups originally come from the study of
the Brownian holonomy on sub-Riemannian manifolds. The
understanding of this holonomy is closely related to the
construction of parametrices for hypoelliptic Schr\"odinger
equations (see \cite{Ba3}).

The paper is organized as follows. The second section is here for
the sake of clarity of the paper since the framework is quite
simple but the results already interesting: We study stochastic
differential equations driven by Brownian loops. All the results
presented in this section will be later generalized. In the third
section, we introduce the notion of free Carnot group of step $N$
and define a fundamental diffusion on it. We then study the
coupling of this diffusion with the solution of a generic
stochastic differential equation driven by Brownian motions. In
particular we establish an H\"{o}rmander type theorem for the
existence of a smooth density for the joint law of this coupling.
The fourth section constitutes the heart of this paper and gives
the proofs of the results presented above.

\

Some results of the paper were already  announced in the note \cite{B1} and the book \cite{Baubook}.

\section{Stochastic Differential Equations driven by Brownian Loops and Bridges}

We consider first on $\mathbb{R}^n$ stochastic differential
equations of the type
\begin{equation}  \label{SDEbridge}
X_t =x_0 + \sum_{i=1}^d \int_0^t V_i (X_s) \circ dP^i_{s,T},
\text{ } t \leq T
\end{equation}
where:

\begin{enumerate}
\item  $x_0 \in \mathbb{R}^n$;

\item  $V_1,...,V_d$ are $C^{\infty}$ bounded vector fields on $\mathbb{R}^n$%
;

\item  $(P^1_{t,T},...,P^d_{t,T})_{0 \leq t \leq T}$ is a given $d$
-dimensional Brownian bridge from $0$ to $0$ with length $T>0$.
\end{enumerate}

Notice that since $(P_{t,T})_{0 \leq t \leq T}$ is known to be a
semimartingale up to time $T$, the notion of solution for
(\ref{SDEbridge}) is well-defined up to time $T$.

\begin{proposition}
For every $x_0 \in \mathbb{R}^n$, there is a unique solution
$(X_t^{x_0})_{0 \leq t \leq T}$ to (\ref{SDEbridge}). Moreover
there exists a stochastic flow $(\Phi_t, 0 \leq t \leq T)$ of
smooth diffeomorphisms $\mathbb{R}^n \rightarrow \mathbb{R}^n$
associated to the equations (\ref{SDEbridge}).
\end{proposition}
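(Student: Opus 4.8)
The plan is to reduce the statement to the classical theory of stochastic differential equations driven by continuous semimartingales, the only real subtlety being the behaviour at the terminal time $T$, where the Brownian bridge carries a drift that degenerates like $1/(T-t)$.

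First I would record the canonical semimartingale decomposition of the bridge: in its own filtration one has, for $1\le i\le d$,
\[
dP^i_{t,T}=d\beta^i_t-\frac{P^i_{t,T}}{T-t}\,dt,\qquad 0\le t<T,
\]
where $(\beta_t)_{0\le t<T}$ is a $d$-dimensional Brownian motion. On every interval $[0,T-\ve]$, equation (\ref{SDEbridge}) --- after the usual Stratonovich-to-It\^o conversion, whose correction term $\tfrac12\sum_i (DV_i)V_i$ is again $C^\infty$ and bounded --- is a classical It\^o SDE with smooth coefficients having bounded derivatives of all orders. Hence, by the standard theory of stochastic flows (Kunita), it admits a unique strong solution $(X^{x_0}_t)_{0\le t<T}$ together with a stochastic flow $(\Phi_t)_{0\le t<T}$ of smooth diffeomorphisms of $\mathbb{R}^n$; moreover the Jacobian $D\Phi_t$, its inverse, and all higher $x_0$-derivatives solve linear SDEs driven by the same semimartingale, with coefficients built polynomially from the $V_i$ and their derivatives.

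The main step is then to let $t\uparrow T$. The hypothesis recalled just before the statement --- that $(P_{t,T})_{0\le t\le T}$ is a semimartingale up to $T$ --- amounts to saying that the finite-variation part has finite total variation on the closed interval, i.e. $\int_0^T\frac{|P_{s,T}|}{T-s}\,ds<\infty$ almost surely. Using the boundedness of the $V_i$, the martingale part $\sum_i\int_0^t V_i(X_s)\,d\beta^i_s$ has quadratic variation bounded by a constant times $t$, so it is an $L^2$-bounded martingale on $[0,T)$ and converges a.s. and in $L^2$ as $t\to T$; the drift part converges absolutely by the above integrability. Therefore $X_t$ has an almost sure limit $X_T$, and $(X^{x_0}_t)_{0\le t\le T}$ solves (\ref{SDEbridge}) on the closed interval; uniqueness on $[0,T]$ is inherited from uniqueness on $[0,T)$ together with path continuity.

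Finally, running the same convergence argument for the linear SDEs governing $D\Phi_t$, $(D\Phi_t)^{-1}$ and the higher derivatives (whose coefficients are again bounded, since the $V_i$ have bounded derivatives of all orders) shows that these processes also converge as $t\to T$; choosing, via Kolmogorov's continuity criterion, a version of $(t,x_0)\mapsto\Phi_t(x_0)$ that is jointly continuous and smooth in $x_0$, one gets that $\Phi_T$ is a smooth diffeomorphism and $(\Phi_t)_{0\le t\le T}$ is the desired flow. The only genuine obstacle is this passage to the terminal time, and everything there hinges on the quantitative fact that the singular drift of the bridge is nevertheless of finite variation on $[0,T]$ --- which is exactly the content of ``semimartingale up to $T$''. (Alternatively, one may bypass the explicit decomposition altogether and invoke directly the general theorem on flows of SDEs driven by an arbitrary continuous semimartingale on a closed interval, applied to $Z_t=P_{t,T}$.)
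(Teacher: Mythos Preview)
Your argument is correct, and in fact the alternative you mention in your final parenthetical is exactly the paper's proof: the paper simply observes that $(P_{t,T})_{0\le t\le T}$ is a continuous semimartingale on the closed interval $[0,T]$ and cites Kunita's general theorems on existence, uniqueness, and smooth flows for SDEs driven by arbitrary continuous semimartingales (Theorems 3.4.1 and 4.6.5 in \cite{Kun}), with no further work.

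Your main route---writing the explicit bridge decomposition, solving on $[0,T-\ve]$, and then pushing to $t=T$ by hand using boundedness of the $V_i$ and integrability of the singular drift---is a legitimate and more self-contained argument, but it reproves a special case of what Kunita's theorem already gives once one knows the driving process is a semimartingale on $[0,T]$. The only place you are slightly glib is the extension of the Jacobian and its inverse to $t=T$: the linear variational SDEs have coefficients that are bounded in $x$ but multiply the unknown $D\Phi_t$, so ``the same convergence argument'' really needs a Gronwall-type moment bound rather than the direct $L^2$ bound you used for $X_t$ itself. This is standard and is precisely what Kunita's machinery packages up, which is why the paper just invokes it.
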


\begin{proof}
We refer to the book of Kunita \cite{Kun}, where the questions of
existence and uniqueness of a smooth flow for stochastic
differential equations driven by general continuous
semimartingales are treated (cf. Theorem 3.4.1. p. 101 and Theorem
4.6.5. p. 173).
\end{proof}

The random variable $X_T$ where $(X_t)_{0 \leq t \leq T}$ is a
solution of (\ref{SDEbridge}), is of particular interest: It is
closely related to the commutations properties of the vector
fields $V_1,...,V_d$. Indeed, let us consider the following family
of operators $(\mathcal{H}_T)_{T \geq 0}$ defined on the space of
compactly supported smooth functions $f:\mathbb{R}^n \rightarrow
\mathbb{R}$ by
\[
(\mathcal{H}_T f)(x) =\mathbb{E}\left( f(X_{T}^{x})\right), \text{
}x \in \mathbb{R}^n.
\]

Obviously, the family of operators $(\mathcal{H}_T)_{T \geq 0}$ does not
satisfy the semigroup property. It is interesting that in some cases, we can explicitly compute $(\mathcal{H}_T)_{T \geq 0}$.

\begin{theorem}
Assume that the Lie algebra generated by the vector fields $V_i$
is two-step nilpotent (that is, any commutator with length greater
than 3 is 0) then
\[
\mathcal{H}_T=\det \left( \frac{T \Omega}{2 \sinh (\frac{1}{2} T \Omega)}\right)^{\frac{1}{2}},
\]
where $\Omega$ is the $d \times d$ matrix such that
$\Omega_{i,j}=[V_i,V_j]$.
\end{theorem}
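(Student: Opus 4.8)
The plan is to read off $\mathcal{H}_T$ from the Chen--Strichartz expansion of the flow and then evaluate the resulting expectation using the classical formula for the Laplace transform of the L\'evy area of a Brownian bridge. So first I would apply the Chen--Strichartz expansion \cite{Che,Stri,Baubook} to the stochastic flow $(\Phi_t)_{0\le t\le T}$ of \eqref{SDEbridge}. Under the two-step nilpotency hypothesis every iterated bracket of length $\ge 3$ vanishes, so the expansion terminates at the second level and is an exact identity, not a merely formal one:
\[
\Phi_t=\exp\Bigl(\sum_{i=1}^d P^i_{t,T}\,V_i+\sum_{1\le i<j\le d}A^{ij}_t\,[V_i,V_j]\Bigr),
\]
where $A^{ij}_t=\tfrac12\int_0^t\bigl(P^i_{s,T}\circ dP^j_{s,T}-P^j_{s,T}\circ dP^i_{s,T}\bigr)$ is the (Stratonovich) L\'evy area of the bridge; the sign convention in the bracket term is immaterial below because $A^{ij}_T$ is a symmetric random variable. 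Evaluating at $t=T$ and using that the bridge returns to its starting point, $P^i_{T,T}=0$, kills all first-order terms, so $\Phi_T=\exp\bigl(\sum_{i<j}A^{ij}_T[V_i,V_j]\bigr)$ and hence $\mathcal{H}_T=\mathbb{E}\bigl[\exp\bigl(\sum_{i<j}A^{ij}_T[V_i,V_j]\bigr)\bigr]$.

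Two-step nilpotency also forces the vector fields $[V_i,V_j]$ to commute pairwise, so $\{[V_i,V_j]\}_{i<j}$ generates a commutative algebra of operators, and both $\mathcal{H}_T$ and the proposed right-hand side make sense as power series in these commuting operators --- note that $z\mapsto z/(2\sinh(z/2))=1-z^2/24+\cdots$ is an even analytic function equal to $1$ at the origin, so $\det\bigl(\frac{T\Omega}{2\sinh(\frac12 T\Omega)}\bigr)$ and its square root are unambiguous. It therefore suffices to establish the corresponding scalar identity: for real numbers $\theta_{ij}$ ($i<j$), assembled antisymmetrically into a matrix $\Theta$,
\[
\mathbb{E}\Bigl[\exp\Bigl(\sum_{1\le i<j\le d}\theta_{ij}\,A^{ij}_T\Bigr)\Bigr]=\det\left(\frac{T\Theta}{2\sinh\bigl(\tfrac12 T\Theta\bigr)}\right)^{1/2},
\]
after which the theorem follows by substituting $[V_i,V_j]$ for $\theta_{ij}$ and matching the two power series.

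To prove this scalar identity I would diagonalize and reduce to the plane. An orthogonal $O\in O(d)$ puts $\Theta$ into block form with $2\times 2$ blocks $\left(\begin{smallmatrix}0&\mu_k\\-\mu_k&0\end{smallmatrix}\right)$ and possibly some zero rows; since the law of $(P^1_{\cdot,T},\dots,P^d_{\cdot,T})$ is $O(d)$-invariant and Stratonovich integrals transform linearly, the antisymmetric form $\sum_{i<j}\theta_{ij}A^{ij}_T$ becomes, in the rotated coordinates, an independent sum $\sum_k\mu_k\widetilde A_k$, where $\widetilde A_k$ is the L\'evy area of an independent planar Brownian bridge of length $T$. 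The computation thus collapses to the case $d=2$, i.e. to the Brownian-bridge analogue of L\'evy's stochastic area formula,
\[
\mathbb{E}\bigl[e^{i\lambda A_T}\bigr]=\frac{\lambda T/2}{\sinh(\lambda T/2)},
\]
which I would obtain either by a Cameron--Martin/Girsanov argument (the area lies in the second Wiener chaos, and conditioning on the endpoint replaces the $\cosh$ of the free-motion formula by a $\sinh$) or by expanding the bridge in its Karhunen--Lo\`eve sine series and using $\sinh z=z\prod_{n\ge1}\bigl(1+z^2/(\pi n)^2\bigr)$. Setting $\lambda=-i\mu_k$ in the $k$-th factor and taking the product reproduces $\prod_k\frac{T\mu_k/2}{\sin(T\mu_k/2)}$, which is exactly $\det\bigl(\frac{T\Theta}{2\sinh(\frac12 T\Theta)}\bigr)^{1/2}$ since $\Theta$ has eigenvalues $\pm i\mu_k$ together with zeros.

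The difficulty here is less conceptual than bookkeeping: one must keep the constants coherent through the Chen--Strichartz normalization of $A^{ij}$, the factor $2$ relating $\Omega$ to $\Theta$, and the precise shape of the bridge area formula, since a stray sign or a misplaced factor of two would turn $\sinh(\tfrac12 T\Omega)$ into $\sinh(T\Omega)$ or conjure up a spurious $\cosh$. The one genuinely substantive input is the planar Brownian-bridge area formula, which is classical but should be quoted or rederived with the correct constants; a secondary point requiring care is the reading of the asserted identity as an equality of power series in the commuting operators $[V_i,V_j]$.
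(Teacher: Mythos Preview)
Your proposal is correct and follows essentially the same route as the paper: reduce $\Phi_T$ via the (exact, in the two-step case) Chen--Strichartz/It\^o expansion to $\exp\bigl(\sum_{i<j}A^{ij}_T[V_i,V_j]\bigr)$, then evaluate the expectation using the bridge L\'evy-area formula. The only difference is cosmetic---the paper simply cites the Gaveau--L\'evy formula $\mathbb{E}\bigl(e^{i\int_0^T(AP_{s,T},dP_{s,T})}\bigr)=\det\bigl(\tfrac{tA}{\sin tA}\bigr)^{1/2}$ from \cite{Ga}, whereas you sketch its derivation by block-diagonalizing and reducing to the planar case.
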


Before turning to the proof, we mention that the above expression for $\mathcal{H}_T$ is understood in the
sense of pseudo-differential operators. Namely, the expression
\[
\det \left( \frac{\left(x_{i,j}\right)_{1\leq i,j \leq d}}{2 \sinh
(\frac{1}{2} \left(x_{i,j}\right)_{1\leq i,j \leq
d})}\right)^{\frac{1}{2}}
\]
defines an analytic function
\[
\Phi(\left(x_{i,j}\right)_{1\leq i,j \leq d})
\]
and the above theorem says that
\[
\mathcal{H}_T=\int_{\mathbb{R}^{\frac{d(d-1)}{2}}} \hat{\Phi}
(\xi) e^{i T \sum_{i<j} \xi_{i,j} [V_i,V_j]} d\xi
\]
where $\hat{\Phi}$ denotes the Fourier transform of $\Phi$. For
further details on pseudo-differential operators we refer to the
chapter 7 of \cite{Tay2}.

\begin{proof}
It\^o's formula shows that in that two-nilpotent case,
\[
f(X_T^x)=\left( \exp \left( \frac{1}{2} \sum_{1 \leq i<j \leq d}
[V_i,V_j] \int_0^t P^i_{s,T} dP^j_{s,T}-P^j_{s,T} dP^i_{s,T}
\right) f \right)(x).
\]

But, from Gaveau-L\'evy's area formula see \cite{Ga}, if $A$ is a
$d \times d$ skew-symmetric matrix valued in a commutative ring,
then,
\begin{equation*}
\mathbb{E} \left( e^{i \int_0^T (A P_{s,T} ,dP_{s,T} )} \right)=
\det \left( \frac{tA}{\sin tA} \right)^{\frac{1}{2}}.
\end{equation*}
This completes the proof.
\end{proof}
It seems difficult to find a closed expression for
$\mathcal{H}_{T}$ in the general case, we can nevertheless compute a small-time asymptotics:
\begin{theorem}
Let $f:\mathbb{R}^n \rightarrow \mathbb{R}$ be a smooth function
which is compactly supported. In $\mathbf{L}^2$,
\[
\lim_{T\rightarrow
0}\frac{\mathcal{H}_{T}f-f}{T^{2}}=\frac{1}{24}\left( \sum_{1 \leq
i<j \leq d} [V_i,V_j]^2 \right) f.
\]
\end{theorem}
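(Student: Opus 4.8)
The plan is to combine the Chen--Strichartz expansion of the flow $(\Phi_t)$ with the time--reversal symmetry of the Brownian bridge. Throughout write $P_t=P_{t,T}$. First I would record the exact scaling $P_{\cdot,T}\overset{d}{=}\sqrt{T}\,\beta_{\cdot/T}$, where $\beta$ is the standard bridge on $[0,1]$: it shows that each $k$--fold iterated Stratonovich integral $\int_{0\le t_1\le\cdots\le t_k\le T}\circ dP^{i_1}_{t_1}\cdots\circ dP^{i_k}_{t_k}$ is of order $T^{k/2}$ in every $L^p$. Together with Kunita's moment estimates for flows of SDEs driven by continuous semimartingales (cf.\ \cite{Kun}), the stochastic Taylor expansion of $f(X^x_T)$ gives, uniformly for $x$ in any fixed compact set,
\[
f(X^x_T)=\sum_{k=0}^{4}\ \sum_{i_1,\dots,i_k}(V_{i_1}\cdots V_{i_k}f)(x)\int_{0\le t_1\le\cdots\le t_k\le T}\circ dP^{i_1}_{t_1}\cdots\circ dP^{i_k}_{t_k}\ +\ R_T(x),
\]
with $\|R_T\|_{L^2}=o(T^2)$. (For small $T$, $\mathcal{H}_Tf$ is, up to an $L^2$--negligible term, supported in a fixed neighbourhood of $\operatorname{supp}f$ by a Gaussian tail estimate for the bridge, so it suffices to control everything uniformly over that neighbourhood.)

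Next I would prune the expansion using symmetries. Since $(P_{\cdot,T})$ is a bridge from $0$ to $0$, the level--one term vanishes identically, $\int_0^T\circ dP^i=P^i_T=0$; since $-P\overset{d}{=}P$, every odd--length iterated integral has zero mean; and since flipping the sign of a single component preserves the law, $\mathbb{E}[\int_0^T\circ dP^i\circ dP^j]=0$. Writing the Chen--Strichartz exponent as $\log\Phi_T=Z_2+Z_3+Z_4+\cdots$ with $Z_k$ the homogeneous level--$k$ Lie part (so $Z_1=0$ and $Z_k=O_{L^2}(T^{k/2})$), expanding $\Phi_T=e^{\log\Phi_T}$ and collecting terms of homogeneity $\le 4$ (the parts of homogeneity $1,2,3$ having vanishing mean by the above) gives
\[
\mathcal{H}_Tf-f=\mathbb{E}\left[\left(Z_4+\tfrac12 Z_2^2\right)f\right]+o(T^2)\qquad\text{in }L^2 .
\]

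The crucial step is to discard the genuine level--four bracket term $\mathbb{E}[Z_4f]$, and here I would invoke time reversal. The path $\tilde P_s:=P_{T-s,T}$ is again a $0\to 0$ bridge with $\tilde P\overset{d}{=}P$; moreover, by reparametrisation invariance of Stratonovich integrals (equivalently, Kunita's backward flow), the process $u\mapsto X^x_{T-u}$ solves the SDE driven by $\tilde P$ started from $\Phi_T(x)$, so the flow driven by $\tilde P$ equals $\Phi_T^{-1}$. Hence $\Phi_T^{-1}\overset{d}{=}\Phi_T$ as random diffeomorphisms, and therefore
\[
\mathcal{H}_Tf=\mathbb{E}[\Phi_Tf]=\mathbb{E}[\Phi_T^{-1}f]=\tfrac12\,\mathbb{E}\left[(\Phi_T+\Phi_T^{-1})f\right]=\mathbb{E}\left[\cosh(\log\Phi_T)\,f\right].
\]
As $\cosh$ is even, only even powers of $\log\Phi_T$ contribute; equivalently $\log\tilde\Phi_T=-\log\Phi_T$ forces $Z_4\overset{d}{=}-Z_4$, so $\mathbb{E}[Z_4]=0$. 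This leaves $\mathcal{H}_Tf-f=\tfrac12\,\mathbb{E}[Z_2^2f]+o(T^2)$.

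It remains to evaluate $\tfrac12\mathbb{E}[Z_2^2f]$. Exactly as in the two--step nilpotent computation above, the level--two part of the Chen--Strichartz exponent of the bridge is
\[
Z_2=\tfrac12\sum_{1\le i<j\le d}[V_i,V_j]\,A^{ij}_T,\qquad A^{ij}_T:=\int_0^T\left(P^i_{s,T}\,dP^j_{s,T}-P^j_{s,T}\,dP^i_{s,T}\right),
\]
and the scalar random variables $A^{ij}_T$ commute with the vector fields. Differentiating the Gaveau--L\'evy identity $\mathbb{E}(e^{i\int_0^T(AP_{s,T},dP_{s,T})})=\det(\tfrac{TA}{\sin TA})^{1/2}$ twice at $A=0$ gives $\mathbb{E}[A^{ij}_TA^{kl}_T]=\tfrac{T^2}{3}\,\delta_{\{i,j\},\{k,l\}}$, whence
\[
\tfrac12\,\mathbb{E}[Z_2^2f]=\tfrac18\sum_{i<j,\ k<l}[V_i,V_j][V_k,V_l]f\ \mathbb{E}[A^{ij}_TA^{kl}_T]=\frac{T^2}{24}\left(\sum_{1\le i<j\le d}[V_i,V_j]^2\right)f .
\]
Dividing by $T^2$ and letting $T\to0$ gives the claim. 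The main obstacle is the first paragraph: turning the heuristic truncation into a genuine $L^2(dx)$ statement, i.e.\ proving that $R_T$ and the discarded iterated integrals are really $o(T^2)$ uniformly on the relevant compact set, which is where one needs Kunita's estimates for flows driven by the singular--drift bridge semimartingale together with the exact Brownian scaling; the algebraic part -- which terms survive, and the roles of the reflection and time--reversal symmetries -- is then routine.
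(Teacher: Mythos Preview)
Your argument is correct and follows the same skeleton as the paper's: Chen--Strichartz expansion of the flow, the bridge scaling $P_{\cdot,T}\overset{d}{=}\sqrt{T}\,\beta_{\cdot/T}$ to control the remainder, symmetry to kill the first-order terms, and identification of the surviving quadratic piece. The paper actually defers the structural part to the general Theorem~\ref{Theogeneral} (the $N$-step case), which yields $\lim_{T\to0}T^{-2}(\mathcal{H}_Tf-f)=C\sum_{i<j}[V_i,V_j]^2 f$ for a \emph{universal} constant $C$, and then determines $C=\tfrac{1}{24}$ by specialising to the two-step nilpotent situation and Taylor-expanding the explicit formula $\mathcal{H}_T=\det\bigl(\tfrac{T\Omega}{2\sinh(T\Omega/2)}\bigr)^{1/2}$ obtained just before.

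Two points where your execution differs and arguably adds something. First, the paper disposes of all the linear terms $\mathbb{E}[\Lambda_I(P)_T]=0$ with a bare ``by symmetry'', whereas you isolate precisely which symmetry is doing the work at each level; in particular your time-reversal argument (the bridge satisfies $(P_{T-\cdot,T})\overset{d}{=}(P_{\cdot,T})$, hence $\Phi_T^{-1}\overset{d}{=}\Phi_T$ and each log-signature component $\Lambda_I\overset{d}{=}-\Lambda_I$) is the clean way to kill the even level-four contribution $\mathbb{E}[Z_4]$, which the reflection $P\mapsto-P$ alone cannot reach. Second, you extract the constant directly from the second moment of the bridge L\'evy area, $\mathbb{E}[(A_T^{ij})^2]=T^2/3$, rather than via the nilpotent specialisation of the whole operator. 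Both routes ultimately rest on the Gaveau--L\'evy formula, so the difference is one of packaging: the paper's route exhibits the result as the $N=1$ instance of a uniform statement, while yours is more self-contained for this particular theorem and makes explicit the cancellation that the paper leaves implicit.
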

\begin{proof}
We refer to the  proof of Theorem \ref{Theogeneral} which is more general. We however show how the constant $\frac{1}{24}$ is obtained.
The proof of Theorem \ref{Theogeneral} shows that there is a
universal constant $C$ such that
\[
\lim_{T\rightarrow
0}\frac{\mathcal{H}_{T}f-f}{T^{2}}=C\left( \sum_{1 \leq
i<j \leq d} [V_i,V_j]^2 \right) .
\]
Since this constant is universal, in order to compute it, it
suffices to look at the two-step nilpotent case. In that case,
from the previous theorem
\[
\mathcal{H}_T=\det \left( \frac{T \Omega}{2 \sinh (\frac{1}{2} T
\Omega)}\right)^{\frac{1}{2}}.
\]
Therefore
\[
\mathcal{H}_T\sim_{T \rightarrow 0} \det \left( \mathbf{1}-\frac{1}{24}T^2 \Omega^2 \right)^{\frac{1}{2}},
\]
and the computation is easily done.
\end{proof}
We study now sufficient conditions which ensure that the operator
$\mathcal{H}_{T}$ has a smooth kernel (in the two variables) with
respect to the Lebesgue measure of $\mathbb{R}^n$. To answer this
question, it is enough to decide under which conditions the random
variable $X_{T}^{x}$ has a smooth density.

On one hand, we have the following result:

\begin{theorem}
\label{Holonomie step2} Assume that
$[\mathfrak{L},\mathfrak{L}]=0$, then for any solution
$(X_t^{x_0})_{0 \leq t \leq T}$ of (\ref{SDEbridge}) we have
almost surely $X_T^{x_0} = x_0$.
\end{theorem}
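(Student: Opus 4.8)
The statement says: if $[\mathfrak{L},\mathfrak{L}]=0$ (i.e. all the $V_i$'s commute), then the solution of the Brownian-bridge-driven SDE satisfies $X_T^{x_0}=x_0$ almost surely. The key point is that the Chen–Strichartz expansion of the flow $\Phi_T$ collapses dramatically when the Lie algebra is abelian. Since all brackets $[V_i,V_j]=0$, every Lie polynomial $F_{i_1,\dots,i_k}$ with $k\ge 2$ vanishes (a Hall basis of the free Lie algebra in degrees $\ge 2$ consists of iterated brackets, all of which map to $0$ in $\mathfrak L$), so only the degree-one terms survive. Concretely, the flow exponent reduces to $\sum_{i=1}^d V_i\, P^i_{T,T}$, where $P^i_{T,T}$ is the value of the $i$-th coordinate of the Brownian bridge at time $T$. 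But a Brownian bridge from $0$ to $0$ of length $T$ satisfies $P^i_{T,T}=0$ by definition. Hence $\Phi_T=\exp\!\big(\sum_i V_i\cdot 0\big)=\mathrm{Id}$, giving $X_T^{x_0}=\Phi_T(x_0)=x_0$.

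So the steps are as follows. First, reduce to a direct It\^o/Stratonovich computation rather than invoking the formal Chen series: when the $V_i$ commute, the SDE $dX_t=\sum_i V_i(X_t)\circ dP^i_{t,T}$ can be solved explicitly. Let $(e^{tV_i})$ denote the flow of $V_i$; because the vector fields commute, the maps $e^{sV_i}$ and $e^{tV_j}$ commute, and one checks by the Stratonovich chain rule that $X_t = \big(e^{P^1_{t,T}V_1}\circ\cdots\circ e^{P^d_{t,T}V_d}\big)(x_0)$ solves the equation with $X_0=x_0$. This is the cleanest route: one simply verifies that this candidate satisfies the SDE, using that $\frac{d}{ds}e^{sV_i}=V_i\circ e^{sV_i}$ and that the flows commute so the order of composition is irrelevant and the cross-terms vanish. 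Second, invoke uniqueness of the solution (from the Proposition quoted above, via Kunita). Third, evaluate at $t=T$: since $P^i_{T,T}=0$ for every $i$ (the bridge returns to $0$), each factor $e^{P^i_{T,T}V_i}=e^{0}=\mathrm{Id}$, so $X_T^{x_0}=x_0$ almost surely.

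The only mild subtlety — and the step I would be most careful about — is the verification that the proposed product-of-flows formula genuinely solves the Stratonovich SDE, i.e. that no It\^o correction or commutator term sneaks in. This is where commutativity of the $V_i$'s is used in an essential way: for noncommuting fields the composition $e^{P^1 V_1}\circ\cdots\circ e^{P^d V_d}$ would pick up extra drift-like terms from the interaction of the $dP^i$'s, but under $[\mathfrak{L},\mathfrak{L}]=0$ these all vanish. Working in Stratonovich form makes this transparent because the ordinary chain rule applies; one differentiates the composition using $\circ\, dP^i_{t,T}$ and the commuting-flows identity $\frac{\p}{\p s_i}\big(e^{s_1V_1}\circ\cdots\circ e^{s_dV_d}\big) = V_i\circ\big(e^{s_1V_1}\circ\cdots\circ e^{s_dV_d}\big)$, which holds precisely because the flows commute. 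Alternatively, one can bypass the explicit formula entirely: apply the Chen–Strichartz expansion recalled in the introduction, note that every term of degree $k\ge 2$ involves a bracket and hence vanishes, so $\Phi_T=\exp(\sum_i V_i P^i_{T,T})$, and then use $P^i_{T,T}=0$. Either way the essential mechanism is the same — abelianness kills the higher Chen terms, and the bridge condition kills the surviving first-order term.
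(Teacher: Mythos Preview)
Your proposal is correct and follows essentially the same route as the paper: propose the explicit product-of-flows formula $X_t=\big(e^{P^1_{t,T}V_1}\circ\cdots\circ e^{P^d_{t,T}V_d}\big)(x_0)$, verify via the Stratonovich/It\^o chain rule (using commutativity) that it solves the SDE, invoke pathwise uniqueness, and then evaluate at $t=T$ where $P^i_{T,T}=0$. The Chen--Strichartz alternative you sketch is a nice heuristic complement, but the paper's actual proof is exactly your first argument.
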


\begin{proof}
For $i=1,...,d$, let us denote $(e^{tV_i})_{t \in \mathbb{R}}$ the
one-parameter flow associated with the complete vector field
$V_i$. Since the $V_i$'s are assumed to commute, an iterative
application of It\^o's formula shows that the process
\begin{equation*}
\left( \left( e^{P^1_{s,T} V_1} \circ ... \circ e^{P^d_{s,T} V_d}
\right) \left( x_0 \right) \right)_{0 \leq s \leq T}
\end{equation*}
solves the equation (\ref{SDEbridge}) with initial condition
$x_0$. By uniqueness, we have hence
\begin{equation*}
\Phi_s (x_0)=  \left( e^{P^1_{s,T} V_1} \circ ... \circ
e^{P^d_{s,T} V_d} \right) (x_0) , \text{ }0 \leq s \leq T.
\end{equation*}
In particular,
\begin{equation*}
\Phi_T (x_0) =x_0,
\end{equation*}
which is the expected result.
\end{proof}

In general, the weaker condition
$[\mathfrak{L},\mathfrak{L}](x_0)=0$ is not
enough to conclude that for the solution $(X_t^{x_0})_{0 \leq t \leq T}$ of (%
\ref{SDEbridge}) we have almost surely $X_T^{x_0} = x_0$. For
instance, consider in dimension 2,
\[
V_{1}=\left(
\begin{array}{l}
1 \\
0
\end{array}
\right) ,\text{ and }V_{2}=\left(
\begin{array}{l}
0 \\
f\left( x\right),
\end{array}
\right)
\]
where $f$ is a smooth function whose Taylor development at 0 is 0
(by e.g. $ f(x)=e^{-\frac{1}{x^2}} \mathbf{1}_{x>0}$).
Nevertheless, if the vector fields $V_i$'s are assumed to be
analytic on whole $\mathbb{R}^n$, $[\mathfrak{L},
\mathfrak{L}](x_0)=0$ implies that $[\mathfrak{L},
\mathfrak{L}]=0$ and therefore that almost surely $X_T^{x_0} =
x_0$.

\begin{theorem}
\label{Hormander step2} Assume that
$[\mathfrak{L},\mathfrak{L}](x_0)= \mathbb{R}^n$, then for the
solution $(X_t^{x_0})_{0 \leq t \leq T}$ of (\ref {SDEbridge}) the
random variable $X_T^{x_0}$ has a smooth density with respect to
the Lebesgue measure of $\mathbb{R}^n$.
\end{theorem}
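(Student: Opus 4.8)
The plan is to prove the statement by Malliavin calculus: show that $X_T^{x_0}$ is smooth in the Malliavin sense and that its Malliavin covariance matrix is almost surely invertible with inverse in every $L^p$, and then invoke Malliavin's criterion for a smooth density. The structural point — which is why the hypothesis is $[\mathfrak{L},\mathfrak{L}](x_0)=\mathbb{R}^n$ and not the classical Hörmander condition $\mathfrak{L}(x_0)=\mathbb{R}^n$ — is that conditioning to a bridge kills the degree-one iterated integral $\int_0^T dP$. Concretely, I would realize $(P_{t,T})_{0\le t\le T}$ as a Gaussian process and set up the differential calculus on its Cameron--Martin space $H_T=\{h\in H^1([0,T],\mathbb{R}^d):h(0)=h(T)=0\}$ (equivalently, perturb an underlying Brownian motion $W$ by Cameron--Martin directions and transport them through $P_{t,T}=W_t-\tfrac tT W_T$, which produces exactly the $H_T$-perturbations). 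Since the $V_i$ are smooth with bounded derivatives and, as recalled in the paper, $(P_{\cdot,T})$ is a semimartingale up to time $T$, the smooth flow $\Phi$ constructed above and its Jacobian $J_{0\to u}=D\Phi_u$ have $L^p$ bounds uniform on $[0,T]$, and a Picard/Wong--Zakai approximation gives $X_T^{x_0}\in\bigcap_{k,p}\mathbb{D}^{k,p}$.

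Differentiating the equation along $h\in H_T$ yields, with $V=[V_1|\cdots|V_d]$ and $A_u:=J_{0\to u}^{-1}V(X_u)$,
\[
\langle DX_T^{x_0},h\rangle=\int_0^T J_{u\to T}V(X_u)\,\dot h_u\,du=J_{0\to T}\int_0^T \tilde A_u\,\dot h_u\,du,\qquad \tilde A_u:=A_u-\frac1T\int_0^T A_r\,dr ,
\]
the centering of $A_u$ coming from the bridge constraint $\int_0^T\dot h=0$. Hence the Malliavin covariance matrix is $\gamma_T=J_{0\to T}\,C_T\,J_{0\to T}^{*}$ with $C_T=\int_0^T\tilde A_u\tilde A_u^{*}\,du$.

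Now the non-degeneracy argument. Fix $v\in\mathbb{R}^n$ with $v^{*}\gamma_T v=0$, i.e. $v^{*}C_T v=\int_0^T|\tilde A_u^{*}v|^2\,du=0$; since $u\mapsto A_u^{*}v$ is continuous this forces it to be \emph{constant} on $[0,T]$ — constant, not zero, which is precisely where the bridge differs from a Brownian motion. Writing $g^{(k)}(u):=\langle J_{0\to u}^{-1}V_k(X_u),v\rangle$, constancy of $g^{(k)}$ together with the transport rule $d\big(J_{0\to u}^{-1}Y(X_u)\big)=\sum_\ell J_{0\to u}^{-1}[V_\ell,Y](X_u)\circ dP^\ell_{u,T}$ gives $dg^{(k)}(u)=\sum_\ell \langle J_{0\to u}^{-1}[V_\ell,V_k](X_u),v\rangle\circ dP^\ell_{u,T}$; as $g^{(k)}$ has no bracket part, equating quadratic variations forces $\langle J_{0\to u}^{-1}[V_\ell,V_k](X_u),v\rangle=0$ for all $u\in[0,T)$, all $\ell,k$. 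These functions now vanish identically, so the computation iterates: differentiating and equating quadratic variations repeatedly yields $\langle J_{0\to u}^{-1}W(X_u),v\rangle=0$ on $[0,T)$ for every $W\in\mathfrak{L}^p$, $p\ge 2$; taking $u=0$ gives $v\perp W(x_0)$ for all such $W$, i.e. $v\perp[\mathfrak{L},\mathfrak{L}](x_0)=\mathbb{R}^n$, so $v=0$ and $\gamma_T$ is a.s. invertible. To upgrade this to $(\det\gamma_T)^{-1}\in\bigcap_p L^p$ — which is what the smooth density actually requires — one runs the same bracketing chain quantitatively, using a Norris-type lemma for semimartingales driven by $(P_{\cdot,T})$ (a uniformly $L^p$-small such semimartingale has uniformly $L^p$-small drift and diffusion coefficients) and bounding $\gamma_T$ below by a positive power of $C_T$ times $\det J_{0\to T}^2$ (whose inverse has all moments); Malliavin's criterion then gives the smooth density.

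I expect two main obstacles. The first is technical: rigorously placing the anticipating functional $X_T^{x_0}$ in $\mathbb{D}^\infty$ and getting flow/Jacobian estimates uniform up to the terminal time $T$, where the bridge carries the integrable-but-unbounded drift $-V(X)\,P_{\cdot,T}/(T-\cdot)$ — it is cleanest to carry out the quantitative step on a subinterval such as $[0,T/2]$, on which the full bracket information already surfaces. The second, and the genuinely new point, is the mechanism of the third paragraph: the bridge constraint replaces ``$A_u^{*}v=0$'' by ``$A_u^{*}v$ constant'', advancing the Hörmander chain by exactly one bracket. An alternative route — presumably that of Theorem \ref{Theogeneral} — is to realize $(X_t^{x_0})$ jointly with the canonical fundamental diffusion on the free step-$2$ Carnot group $\mathbb{G}$ conditioned to be a bridge in its first layer, and to deduce non-degeneracy of the law of $X_T^{x_0}$ from that of the $\mathbb{G}$-valued bridge endpoint together with the submersivity, under $[\mathfrak{L},\mathfrak{L}](x_0)=\mathbb{R}^n$, of the map sending that endpoint to $X_T^{x_0}$.
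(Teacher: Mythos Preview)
Your Malliavin-calculus argument is correct and is in fact the paper's \emph{second} proof: after the main proof the paper sketches exactly this route, representing the bridge as $P_{t,T}=(T-t)\int_0^t (T-s)^{-1}\,dW_s$, computing $\mathbf{D}_sX_T$, and observing that degeneracy in a direction $h$ forces $s\mapsto J_{0\to s}^{-1}\sigma(X_s)\,h$ to be \emph{constant} (not zero), after which the bracket chain starts at length two. Your representation $P=W-\tfrac tT W_T$ and the centering $\tilde A_u=A_u-\tfrac1T\int_0^T A_r\,dr$ give a cleaner way to see the same constancy phenomenon.

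The paper's \emph{primary} proof, however, is much shorter and sidesteps the obstacle you correctly flag about $(\det\gamma_T)^{-1}\in\bigcap_pL^p$. One considers the \emph{unconditioned} SDE $Y_t=x_0+\sum_i\int_0^t V_i(Y_s)\circ dB^i_s$ and observes that $(Y_t,B_t)$ is a diffusion on $\mathbb{R}^n\times\mathbb{R}^d$ with generator $\tfrac12\sum_i(V_i+\partial_{b_i})^2$; the hypothesis $[\mathfrak{L},\mathfrak{L}](x_0)=\mathbb{R}^n$ is exactly the strong H\"ormander condition for this coupled generator at $(x_0,0)$, so $(Y_T,B_T)$ has a smooth joint density. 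Conditioning on $B_T=0$ then yields a smooth density for $X_T^{x_0}$ directly. This buys you smoothness without any Norris-type quantitative argument; your Malliavin route, like the paper's own sketch of it, gives a.s.\ invertibility cleanly but would still owe the $L^p$ estimates for smoothness.

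Your closing ``alternative route'' is essentially this conditioning argument, but you have over-indexed the Carnot group: for the Brownian bridge ($N=1$) the relevant group is $\mathbb{G}_1(\mathbb{R}^d)\simeq\mathbb{R}^d$, not a step-$2$ group, and one conditions on $B_T=0$ rather than on a bridge endpoint in a larger nilpotent group. The step-$N$ Carnot coupling is what the paper uses later (Theorem~\ref{Hormander it�r�} and Theorem~\ref{Theoreme general}) for $N$-step loops.
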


\begin{proof}
Let us consider  the solution $(Y_t)_{t \geq 0}$ of the following
stochastic differential equation:
\begin{equation*}
Y_t =x_0 + \sum_{i=1}^d \int_0^t V_i (Y_s) \circ dB^i_s, \text{ }
t \geq 0,
\end{equation*}
where $(B^1_{t},...,B^d_{t})_{ t \geq 0}$ is a $d$-dimensional
standard  Brownian motion. Since
$[\mathfrak{L},\mathfrak{L}](x_0)= \mathbb{R}^n$, it easily seen
that $(Y_t,B_t)_{t \geq 0}$ is a diffusion process whose
infinitesimal generator satisfies the (strong) H\"ormander's
condition at $(x_0,0)$. Therefore, the random variable
\begin{equation*}
(Y_T,B_T)
\end{equation*}
has a smooth density with respect to the Lebesgue measure on
$\mathbb{R}^n \times \mathbb{R}^n$. This implies the existence of
a smooth function $p:\mathbb{R}^n \rightarrow \mathbb{R}$ such
that for all bounded measurable function $f:\mathbb{R}^n
\rightarrow \mathbb{R}$
\begin{equation*}
\mathbb{E} (f(Y_T) \mid B_T=0)=\int_{\mathbb{R}^n} f (y) p(y) dy.
\end{equation*}
Now, since in law the process $(P_{t,T})_{0 \leq t \leq T}$, is
identical to the Brownian motion $(B_t)_{0 \leq t \leq T}$
conditioned by $B_T=0$, the function $p$ is actually exactly the
density of the random variable $X_T^{x_0}$ where $(X_t^{x_0})_{0
\leq t \leq T}$ is the solution of (\ref{SDEbridge}) with initial
condition $x_0$.
\end{proof}

Another proof of this result may be given by using standard Malliavin calculus tools (see chapter 2 of Nualart's book \cite{Nu}, whose notations below are
taken). 

We work  in the $d$-dimensional Wiener space and define
the Brownian loop $(P^1_{t,T},...,P^d_{t,T})_{0 \leq t \leq T}$ as
the Wiener integral
\[
P_{t,T} = (T-t) \int_0^t \frac{dW_s}{T-s}, \text{ }t<T, \text{ and
}P_{T,T} = 0,
\]
where $W$ is the $d$-dimensional Wiener process. In this setting,
it is not difficult to prove that if $(X_t)_{0 \leq t \leq T}$ is
a solution of (\ref {SDEbridge}), then $X_T \in
\mathbb{D}^{\infty}$. Moreover, a direct computation shows that
for any $0 \leq s \leq T$, the Malliavin derivative is given
\[
\mathbf{D}_s X_T=\mathbf{J}_{0 \rightarrow T} \left( \mathbf{J}_{0
\rightarrow s}^{-1} \sigma (X_s) - \frac{1}{T-s} \int_s^T
\mathbf{J}_{0 \rightarrow u}^{-1} \sigma (X_u) du \right),
\]
where $(\mathbf{J}_{0 \rightarrow t})_{ 0 \leq t \leq T}$ is the
first variation process defined by
\[
\mathbf{J}_{0 \rightarrow t}=\frac{\partial \Phi_t}{\partial x},
\]
and $\sigma$ the $n \times d$ matrix field $\sigma =
(V_1,...,V_d)$. From this, we can deduce that the Malliavin matrix
of $X_T$ must be invertible. Indeed, if not, we could find a non
zero vector $h \in \mathbb{R}^d$ and a finite stopping time
$\theta >0$ such that $\mathbf{D}_s X_T \cdot h =0$ for $0 \leq s
\leq \theta$. This would lead to the conclusion that
$(\mathbf{J}_{0 \rightarrow s}^{-1} \sigma (X_s) \cdot h)_{0 \leq
s \leq T}$ must be constant.

Observe now that
\[
\mathbf{J}_{0 \rightarrow s}^{-1} V_i (X_s) =\Phi_s^{\ast} V_i,
\]
where $\Phi$ denotes the stochastic flow associated with equation
(\ref{SDEbridge}), and where $\Phi_s^{\ast} V_i$ denotes the
pull-back action of $\Phi$ on $V_i$. Therefore, according to the
It\^o's formula, we obtain that for $t<\theta$,
\[
\sum_{j=1}^d \int_0^t ~^T h ~ \left( \Phi_s^{\ast} [V_j , V_i ]
\right) (x_0) \circ dP^j_{s,T},~~i=1,...,d.
\]
is constant. Therefore, for $0 \leq s <\theta$,
\[
^T h ~ \left( \Phi_s^{\ast} [V_j , V_i ] \right)
(x_0)=0,~~i,j=1,...,d.
\]
By applying this at $s=0$, we obtain then
\[
^T h ~ [V_j , V_i ](x_0)=0,~~i,j=1,...,d.
\]
New iterations of the It\^o's formula show then that, we actually
have
\[
^T h ~U(x_0)=0,~~U \in \mathfrak{L}^2(x_0),
\]
so that $h=0$.

We mention that the problem of the existence of densities for stochastic differential equations driven by Brownian bridges was also discussed in \cite{CF}. But unlike our case, the existence of the density is discussed for times $t<T$.



\section{Free Carnot Groups and H\"ormander's Type Theorems}
In this section we now state some basic facts about Carnot groups.

\subsection{Free Carnot groups}
We introduce the notion of Carnot groups.
Such Lie groups appear as tangent spaces to hypoelliptic diffusions (see \cite{Ba2}).
For more details on the material presented in this section, we refer to the Chapter 2 of \cite{Baubook}.
Let $N \geq 1$. A Carnot group of depth (or step) $N$ is a simply
connected Lie group $\mathbb{G}$ whose Lie algebra can be written
\[
\mathcal{V}_{1}\oplus...\oplus \mathcal{V}_{N},
\]
where
\[
\lbrack \mathcal{V}_{i},\mathcal{V}_{j}]=\mathcal{V}_{i+j}
\]
and
\[
\mathcal{V}_{s}=0,\text{ for }s>N.
\]
\begin{example}[Heisenberg Group]
The Heisenberg group\index{Heisenberg group} $\mathbb{H}$ can be
represented as the set of $3\times3$ matrices:
\[
\left(
\begin{array}
[c]{ccc}
1 & x & z\\
0 & 1 & y\\
0 & 0 & 1
\end{array}
\right)  ,\text{ \ }x,y,z\in\mathbb{R}.
\]
The Lie algebra of $\mathbb{H}$ is spanned by the matrices
\[
D_{1}=\left(
\begin{array}
[c]{ccc}
0 & 1 & 0\\
0 & 0 & 0\\
0 & 0 & 0
\end{array}
\right)  ,\text{ }D_{2}=\left(
\begin{array}
[c]{ccc}%
0 & 0 & 0\\
0 & 0 & 1\\
0 & 0 & 0
\end{array}
\right)  \text{ and }D_{3}=\left(
\begin{array}
[c]{ccc}
0 & 0 & 1\\
0 & 0 & 0\\
0 & 0 & 0
\end{array}
\right)  ,
\]
for which the following equalities hold
\[
\lbrack D_{1},D_{2}]=D_{3},\text{ }[D_{1},D_{3}]=[D_{2},D_{3}]=0.
\]
Thus
\[
\mathfrak{h} \sim \mathbb{R} \oplus [\mathbb{R},\mathbb{R}],
\]
and, therefore, $\mathbb{H}$ is a (free) two-step Carnot group.
\end{example}

Let us now take a basis $U_1,...,U_d$ of the vector space
$\mathcal{V}_1$. The vectors $U_i$'s can be seen as left invariant
vector fields on $\mathbb{G}$ so that we can consider the
following stochastic differential equation on $\mathbb{G}$:
\begin{equation}
\label{SDEcarnot} d\tilde{B}_t =\sum_{i=1}^d \int_0^t U_i
(\tilde{B}_s) \circ dB^i_s, \text{ } t \geq 0,
\end{equation}
where $(B_t)_{t \geq 0}$ is a standard Brownian motion. This
equation is easily seen to have a unique (strong) solution
$(\tilde{B}_t)_{t \geq 0}$ associated with the initial condition
$\tilde{B}_0=0_{\mathbb{G}}$.

\begin{definition}
The process $(\tilde{B}_t)_{  t \geq 0}$ is called the lift of the
standard Brownian motion $(B_{t})_{  t \geq 0}$ in the group
$\mathbb{G}$ with respect to the basis $(U_1,...,U_d)$.
\end{definition}
Notice that $(\tilde{B}_t)_{  t \geq 0}$ is a Markov process with
generator $\frac{1}{2} \sum_{i=1}^d U_i ^2$. This second-order
differential operator is, by construction, left-invariant and
hypoelliptic. For $(\tilde{B}_t)_{  t \geq 0}$, we actually have
an explicit expression. To give this expression, we first have to
introduce some notations. If $I=(i_1,...,i_k) \in \{ 1,..., d
\}^k$ is a word, we denote $\mid I \mid=k $ its length and by
$U_I$ the commutator defined by
\[
U_I = [U_{i_1},[U_{i_2},...,[U_{i_{k-1}}, U_{i_{k}}]...].
\]
The group of permutations of the index set $\{1,...,k\}$ is
denoted $\mathfrak{S}_k$. If $\sigma \in \mathfrak{S}_k$, we
denote $e(\sigma)$ the cardinality of the set
\[
\{ j \in \{1,...,k-1 \} , \sigma (j) > \sigma(j+1) \}.
\]
As a direct consequence of the Chen-Strichartz development theorem
(see Proposition 2.3 of \cite{Baubook}, or \cite{Che}, \cite{Stri}), we
have
\begin{proposition}
We have
\[
\tilde{B}_t = \exp \left( \sum_{k = 1}^N \sum_{I=(i_1,...,i_k)}
\Lambda_I (B)_t U_I \right), \text{ }t \geq 0,
\]
where:
\[
\Lambda_I (B)_t=\sum_{\sigma \in \mathfrak{S}_k} \frac{\left(
-1\right) ^{e(\sigma )}}{k^{2}\left(
\begin{array}{l}
k-1 \\
e(\sigma )
\end{array}
\right) } \int_{0 \leq t_1 \leq ... \leq t_k \leq t} \circ
dB^{\sigma^{-1}i_1}_{t_1} \circ ... \circ
dB^{\sigma^{-1}i_k}_{t_k}.
\]
\end{proposition}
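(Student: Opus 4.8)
\emph{Proof strategy.} The plan is to read off the formula from the Chen--Strichartz development theorem applied to the $\mathbb{G}$-valued equation (\ref{SDEcarnot}), and then to use nilpotency to turn the resulting formal identity into a genuine one. First I would recall that, for an equation $dg_t=\sum_{i=1}^d U_i(g_t)\circ dB^i_t$ with $g_0=0_{\mathbb{G}}$ on a Lie group $\mathbb{G}$, the Chen--Strichartz theorem gives, formally (a priori in a suitable completion of the universal enveloping algebra),
\[
g_t=\exp\left(\sum_{k\ge 1}\sum_{I=(i_1,\dots,i_k)\in\{1,\dots,d\}^k}\Lambda_I(B)_t\,U_I\right),
\]
with $\Lambda_I(B)_t$ exactly the iterated--integral expression in the statement; the Stratonovich iterated integrals enter here just as their deterministic analogues would, since Stratonovich calculus obeys the ordinary product and chain rules. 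The coefficients $\frac{(-1)^{e(\sigma)}}{k^2\binom{k-1}{e(\sigma)}}$ are the Strichartz weights occurring in the explicit formula for the logarithm of a time--ordered exponential.

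Next I would use the hypothesis that $\mathbb{G}$ is a Carnot group of step $N$: its Lie algebra is $N$-step nilpotent, so each iterated bracket $U_I$ vanishes as soon as $|I|=k>N$ (a $k$-fold bracket of elements of $\mathcal{V}_1$ lies in $\mathcal{V}_k=0$). Hence all terms with $k>N$ drop out of the series and the outer sum collapses to the finite range $1\le k\le N$. Since only finitely many terms survive and $\exp$ is now the genuine (diffeomorphic) exponential map of the nilpotent group $\mathbb{G}$, the formal Chen--Strichartz identity becomes an honest identity of $\mathbb{G}$-valued continuous semimartingales, with no convergence or remainder estimate needed; this is precisely the claimed expression for $\tilde{B}_t$.

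If one prefers a self-contained argument in place of a citation of Proposition 2.3 of \cite{Baubook} (or of \cite{Che}, \cite{Stri}), one may verify the formula directly: set $Z_t:=\sum_{k=1}^N\sum_{I}\Lambda_I(B)_t\,U_I$, a continuous semimartingale with values in the Lie algebra, and compute $d(\exp Z_t)$ via It\^o's formula together with the ``$d\exp$'' expansion --- the right--trivialized differential of the exponential map, which on a step-$N$ nilpotent group is a \emph{polynomial} in $\mathrm{ad}_{Z_t}$ because $\mathrm{ad}_{Z_t}^{N}=0$. Using the grading to truncate all iterated brackets at length $N$, one checks that $d(\exp Z_t)=\sum_i U_i(\exp Z_t)\circ dB^i_t$ and $\exp(Z_0)=0_{\mathbb{G}}$, and then invokes uniqueness of the solution of (\ref{SDEcarnot}). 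In either route the step I expect to be the main obstacle is the purely algebraic one: checking that the descent weights $\frac{(-1)^{e(\sigma)}}{k^2\binom{k-1}{e(\sigma)}}$ are exactly the coefficients produced when the logarithm of the time--ordered exponential is re--expanded over the spanning family $\{U_I\}$ of left--nested commutators rather than over a Hall basis. This identity involves no probability at all --- it is Strichartz's formula --- and I would either cite it or prove it by induction on $k$, the nontrivial input being the passage from the associative (iterated--integral) form of Chen's series to its Lie form, governed by the Dynkin--Specht--Wever idempotent.
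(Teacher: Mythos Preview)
Your proposal is correct and follows essentially the same approach as the paper: the paper presents this proposition as ``a direct consequence of the Chen--Strichartz development theorem'' and simply cites Proposition 2.3 of \cite{Baubook} (or \cite{Che}, \cite{Stri}) without further argument. Your write-up supplies the details the paper omits --- namely, why nilpotency of step $N$ truncates the formal series to $1\le k\le N$ and turns the formal identity into an honest one --- and your alternative self-contained verification via $d\exp$ and uniqueness is a nice addition, but it goes beyond what the paper itself provides.
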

For instance:
\begin{enumerate}
\item The component of the process $(\ln (\tilde{B}_t))_{t \geq 0}$  in $\mathcal{V}_1$ is simply
\begin{equation*}
\sum_{i=1}^d U_i B^i_t , \text{ } t \geq 0.
\end{equation*}
\item The component in $\mathcal{V}_2$ is
\begin{equation*}
\frac{1}{2} \sum_{1 \leq i<j \leq d} [ U_i , U_j ] \left( \int_0^t
B^i_s  dB^j_s-B^j_s dB^i_s \right), \text{ } t \geq 0.
\end{equation*}
\end{enumerate}
The Carnot group $\mathbb{G}$ is said to be free if $\mathfrak{g}$
is isomorphic to the free Lie algebra with $d$ generators with the
relations that all brackets of length more than $N$ vanish . In
that case, $\dim \mathcal{V}_{j}$ is the number of Hall words of
length $j$ in the free algebra with $d$ generators. We thus have,
according to Bourbaki \cite{Bour} (see also Reutenauer \cite{Reu}
pp.96):
\begin{equation*}
\dim \mathcal{V}_{j}= \frac{1}{j} \sum_{i \mid j} \mu (i)
d^{\frac{j}{i}}, \text{ } j \leq N,
\end{equation*}
where $\mu$ is the M\"obius function. We easily deduce from this
that when $N \rightarrow +\infty$,
\[
\dim \mathfrak{g} \sim \frac{d^N}{N}.
\]
An important algebraic point is that, up to an isomorphism there
is one and only one free Carnot with a given depth and a given
dimension for the basis. Let us denote $m=\dim \mathbb{G}$.
Choose now a Hall family and consider the $\mathbb{R}^m$-valued
process $(B_t^{\ast})_{t \geq 0}$ obtained by writing the
components of $(\ln (\tilde{B}_t))_{t \geq 0}$ in the
corresponding Hall basis of $\mathfrak{g}$. It is easily seen that
$(B_t^{\ast})_{t \geq 0}$ solves a stochastic differential
equation that can be written
\[
B_t^{\ast}=\sum_{i=1}^d \int_0^t D_i (B_s^{\ast}) \circ dB^i_s,
\]
where the $D_i$'s are polynomial vector fields on $\mathbb{R}^m$
(for an explicit form of the $D_i$'s, which depend of the choice
of the Hall basis, we refer to Vershik-Gershkovich \cite{Ger-Ver}
pp.27) . With these notations, we have the following proposition
(see also \cite{Ger-Ver}).
\begin{proposition}
\label{Def Carnot numerique} On $\mathbb{R}^m$, there exists a
unique group law $\star$ which makes the vector fields
$D_1,...,D_d$ left invariant. This group law is polynomial of
degree $N$ and we have moreover
\[
\left( \mathbb{R}^m , \star \right) \sim \mathbb{G}.
\]
The group $\left( \mathbb{R}^m , \star \right)$ is called
the free Carnot group of step $N$ over $\mathbb{R}^d$. It shall be
denoted $\mathbb{G}_{N} (\mathbb{R}^d )$. The process $B^{\ast}$
shall be called the lift of $B$ in $\mathbb{G}_{N} (\mathbb{R}^d
)$.
\end{proposition}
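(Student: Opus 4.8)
The plan is to transport the group structure of $\mathbb{G}$ to $\mathbb{R}^m$ through the chosen Hall coordinates, and then to read off left-invariance of the $D_i$'s from the way the lift $B^{\ast}$ is built. Since $\mathbb{G}$ is a simply connected nilpotent Lie group, the exponential map $\exp:\mathfrak{g}\to\mathbb{G}$ is a global diffeomorphism; composing it with the linear isomorphism $\mathfrak{g}\simeq\mathbb{R}^m$ coming from the Hall basis yields a diffeomorphism $\psi:\mathbb{G}\to\mathbb{R}^m$ sending $g$ to the Hall coordinates of $\ln g$. I define $\star$ by transporting the multiplication of $\mathbb{G}$, i.e. $a\star b:=\psi\!\left(\psi^{-1}(a)\cdot\psi^{-1}(b)\right)$; by construction $\psi:\mathbb{G}\to(\mathbb{R}^m,\star)$ is an isomorphism of Lie groups, which is the last assertion of the proposition. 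In the linear exponential coordinates the product $a\star b$ is given by the Baker--Campbell--Hausdorff formula, and since $\mathfrak{g}$ is nilpotent of step $N$ this series reduces to a finite sum of iterated brackets of length at most $N$; each bracket of length $\ell$ contributes a summand of degree $\ell$ in $(a,b)$, so $\star$ is polynomial of degree $N$.

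Next I would verify that $D_i=\psi_\ast U_i$. This gives left-invariance of the $D_i$ for $\star$ at once, since $U_i$ is left-invariant on $\mathbb{G}$ and $\psi$ is a group isomorphism. One way to see the identity: the $D_i$ are, by their very definition in \cite{Ger-Ver}, the expressions of the canonical left-invariant fields $U_i$ read in the Hall chart, that is $\psi_\ast U_i$. Alternatively one argues probabilistically: $B^{\ast}_t=\psi(\tilde B_t)$ and $\tilde B$ solves $d\tilde B=\sum_i U_i(\tilde B)\circ dB^i$, so the Stratonovich chain rule gives $dB^{\ast}=\sum_i(\psi_\ast U_i)(B^{\ast})\circ dB^i$; comparing this with $B^{\ast}_t=\sum_i\int_0^t D_i(B^{\ast}_s)\circ dB^i_s$ through the cross-variations $d\langle B^{\ast},B^i\rangle_t=D_i(B^{\ast}_t)\,dt$, and using that $B^{\ast}$ is a hypoelliptic diffusion whose law has full support in $\mathbb{R}^m$, forces $D_i=\psi_\ast U_i$ as vector fields.

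It remains to establish uniqueness. Let $\star'$ be a second group law on $\mathbb{R}^m$, with identity element $0$, for which $D_1,\dots,D_d$ are left-invariant. A left-invariant vector field has its flow equal to a right translation, so the flow $\phi^i_t$ of $D_i$ satisfies $\phi^i_t(x)=x\star'\phi^i_t(0)$ and, by associativity, $\phi^{i_1}_{t_1}\circ\cdots\circ\phi^{i_k}_{t_k}(x)=x\star'\big(\phi^{i_1}_{t_1}\circ\cdots\circ\phi^{i_k}_{t_k}(0)\big)$ for all words $(i_1,\dots,i_k)$ and times $t_j$. Now the iterated brackets of $D_1,\dots,D_d$ indexed by the Hall words are pointwise linearly independent on $\mathbb{R}^m$ --- this is the freeness built into the construction, equivalently the H\"ormander condition underlying the whole section --- so these vector fields span the tangent space at every point and the iterated flows above, started at $0$, reach all of $\mathbb{R}^m$. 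Given any $y$, choosing such a composition with $\phi^{i_1}_{t_1}\circ\cdots\circ\phi^{i_k}_{t_k}(0)=y$ yields $x\star' y=\phi^{i_1}_{t_1}\circ\cdots\circ\phi^{i_k}_{t_k}(x)$, an expression involving only the flows of the $D_i$; hence $\star'$ is completely determined by the $D_i$, so $\star'=\star$.

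The step I expect to be the real obstacle is the identification $D_i=\psi_\ast U_i$: one has to make sure that the combinatorial Hall-coordinate description of the lift genuinely coincides with the push-forward of the canonical left-invariant fields, and that the coefficients of a Stratonovich equation are indeed recovered from its solution. The remaining ingredients --- finiteness of the Baker--Campbell--Hausdorff series, the behaviour of flows of left-invariant vector fields, and the Chow-type reachability statement --- are standard.
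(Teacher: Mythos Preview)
The paper does not actually prove this proposition: it is stated as a known structural fact, with a pointer to Vershik--Gershkovich \cite{Ger-Ver} and, implicitly, to Chapter~2 of \cite{Baubook}. So there is no ``paper's own proof'' to compare against; your proposal fills in what the author leaves to references.

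Your argument is sound. The transport of the product via $\psi=\text{(Hall coordinates)}\circ\log$ and the finiteness of the Baker--Campbell--Hausdorff series in step~$N$ give existence, polynomiality of degree~$N$, and the isomorphism with $\mathbb{G}$ in one stroke. The uniqueness argument via flows of left-invariant fields being right translations, combined with Chow reachability from the spanning of the iterated brackets, is the standard and correct route; your explicit hypothesis that the competing law $\star'$ has identity $0$ is the right normalisation (without it uniqueness would fail by a translation), and it is the intended reading of the proposition since $\psi$ sends $e_{\mathbb{G}}$ to $0$.

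On the point you flag as the obstacle: in this paper the identification $D_i=\psi_\ast U_i$ is essentially definitional. The text introduces $B^{\ast}$ as the Hall-coordinate expression of $\ln\tilde B$, i.e.\ $B^{\ast}=\psi(\tilde B)$, and then \emph{defines} the $D_i$ as the Stratonovich coefficients of the SDE satisfied by $B^{\ast}$. The Stratonovich chain rule applied to $\psi$ therefore yields $D_i=\psi_\ast U_i$ immediately; no appeal to support of the law or recovery of coefficients from a solution is needed. Your probabilistic alternative works too, but it is more than required here.
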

\begin{remark}
Notice that $\mathbb{G}_{N} (\mathbb{R}^d )$ is, by construction,
endowed with the basis of vector fields $(D_1,...,D_d)$. These
vector fields agree at the origin with
$\left(\frac{\partial}{\partial
x_1},\cdots,\frac{\partial}{\partial x_d} \right)$.
\end{remark}
\subsection{H\"ormander's type theorems}
Consider now on $\mathbb{R}^n$ stochastic differential equations
of the type
\begin{equation}  \label{SDE}
X_t =x_0 + \sum_{i=1}^d \int_0^t V_i (X_s) \circ dB^i_s, \text{ }
t \geq 0,
\end{equation}
where:

\begin{enumerate}
\item  $x_{0}\in \mathbb{R}^{n}$;

\item  $V_{1},...,V_{d}$ are $C^{\infty }$ bounded vector fields on $%
\mathbb{R}^{n}$;

\item  $\circ$ denotes Stratonovitch integration;

\item  $(B^1_{t},...,B^d_{t})_{ t \geq 0}$ is a $d$-dimensional
standard Brownian motion.
\end{enumerate}

It is well-known that for every $x_0 \in \mathbb{R}^n$, there is a
unique solution $(X_t^{x_0})_{t \geq 0}$ to (\ref{SDE}) and
moreover that there exists a stochastic flow $(\Phi_t, t \geq 0)$
of smooth diffeomorphisms $\mathbb{R}^n \rightarrow \mathbb{R}^n$
associated to the equations (\ref{SDE}). Let us denote by
$\mathfrak{L}$ the Lie algebra generated by the vector fields
$V_i$ and for $p \geq 2$, by $\mathfrak{L}^p$ the Lie subalgebra
defined by
\[
\mathfrak{L}^p = \{ [X,Y], \text{ } X \in \mathfrak{L}^{p-1}, Y
\in \mathfrak{L} \}.
\]
Moreover if $\mathfrak{a}$ is a subset of $\mathfrak{L}$, we
denote
\[
\mathfrak{a} (x) = \{ V(x) , V \in \mathfrak{a} \}, \text{ }x \in
\mathbb{R} ^n.
\]
In this framework, we have the following:

\begin{theorem}
\label{Hormander itéré} Let $x_0 \in \mathbb{R}^n$. If
$\mathfrak{L}^{N+1}(x_0) =\mathbb{R}^n$, then for any $t>0$, the
random variable
\[
(X_t^{x_0},B_t^{\ast})
\]
has a smooth density with respect to any Lebesgue measure on
$\mathbb{R}^n \times \mathbb{G}_{N} (\mathbb{R}^d )$, where
$(X_t^{x_0})_{t \geq 0}$ is the solution of ( \ref{SDE}) with
initial condition $x_0$ and $(B_t^{\ast})_{t\geq 0}$ the lift of
$(B_t)_{t\geq 0}$ in $\mathbb{G}_{N} (\mathbb{R}^d )$.
\end{theorem}

\begin{proof}
With a slight abuse of notation, we still denote  $V_i$ (resp.
$D_i$) the extension of $V_i$ (resp. $D_i$) to the space
$\mathbb{R}^n \times \mathbb{G}_{N} (\mathbb{R}^d )$. The process
$(X_t^{x_0},B_t^{\ast})_{t \geq 0}$ is easily seen to be a
diffusion process in $\mathbb{R}^n \times \mathbb{G}_{N}
(\mathbb{R}^d )$ with infinitesimal generator
\begin{equation*}
\frac{1}{2} \sum_{i=1}^d ( V_i +D_i)^2.
\end{equation*}
Thus, to prove the theorem, it is enough to check the
H\"ormander's condition for this operator at the point $(x_0,0)$.
Now, notice that  $[ \mathfrak{L} , \mathfrak{g}_{N} (\mathbb{R}^d
)]=0$, so that
\begin{equation*}
\mathbf{Lie} ( V_1 + D_1 ,..., V_n +D_n)(x_0,0) \simeq
\mathfrak{L}^{N+1}(x_0) \oplus \mathfrak{g}_{N} (\mathbb{R}^d ),
\end{equation*}
because $\mathfrak{g}_{N} (\mathbb{R}^d )$ is step $N$ nilpotent.
We denoted $\mathbf{Lie} ( V_1 + D_1 ,..., V_n +D_n)$ the Lie
algebra generated by $(V_1+D_1,...,V_n+D_n)$. The conclusion
follows readily.
\end{proof}

\begin{example}
For $N=0$, we have $\mathbb{G}_{0} (\mathbb{R}^d )= \{ 0 \} $ and
Theorem \ref{Hormander itéré} is the classical H\"ormander's
theorem.
\end{example}

\begin{example}
For $N=1$, we have $\mathbb{G}_{1} (\mathbb{R}^d ) \simeq
\mathbb{R}^d$ and Theorem \ref {Hormander itéré} gives a sufficent
condition for the existence of a smooth density for the variable
\[
(X_t^{x_0},B_t).
\]
\end{example}

\begin{example}
For $N=2$, we have $\mathbb{G}_{2} (\mathbb{R}^d ) \simeq
\mathbb{R}^d \times \mathbb{R}^{ \frac{d(d-1)}{2}}$ and Theorem
\ref{Hormander itéré} gives a sufficient condition for the
existence of a smooth density for the variable
\[
(X_t^{x_0},B_t, \wedge B_t).
\]
where
\[
\wedge B_t = \left( \frac{1}{2} \int_0^t B^i_s dB^j_s-B^j_s dB^i_s
\right)_{1 \leq i < j \leq d}.
\]
\end{example}

\section{Stochastic Differential Equations Driven by N-Step Brownian Loops}

In this section, we now enter into the heart of our study.

\subsection{N-step Brownian Loops}
On the free Carnot group $\mathbb{G}_{N} (\mathbb{R}^d )$,
consider the fundamental process $(B_t^{\ast})_{t \geq 0}$ defined
as the solution of the stochastic differential equation
\[
B_t^{\ast} =\sum_{i=1}^d \int_0^t D_i (B_s^{\ast}) \circ dB^i_s,
\text{ } t \geq 0.
\]
As a consequence of H\"ormander's theorem, the diffusion with
generator
\[
\frac{1}{2} \sum_{i=1}^d D_i^2
\]
has a smooth transition kernel $p_t (x,y)$, $t>0$ with respect to
the Lebesgue measure.

\begin{proposition}
Let $T>0$. There exists a unique $\mathbb{R}^d $-valued continuous
process $(P^N_{t,T})_{0 \leq t \leq T}$ such that
\begin{equation}
\label{N step loop} P^{N,i}_{t,T}= B^i_t + \int_0^t  D_i \ln
p_{T-s} \left( P^{N,*}_{s,T},0_{\mathbb{G}_{N} (\mathbb{R}^d )}
\right) ds , ~~t<T,~~i=1,...,d,
\end{equation}
where $(P^{N,\ast}_{t,T})_{0 \leq t \leq T}$ denotes the lift of $(P^N_{t,T})_{0 \leq t \leq T}$ in $\mathbb{G}_{N} (\mathbb{R}^d )$.
It enjoys the following properties:

\begin{enumerate}
\item  $P^{N,*}_{T,T}=0_{\mathbb{G}_{N} (\mathbb{R}^d )}$, almost surely;

\item  for any predictable and bounded functional $F$,
\[
\mathbb{E} \left( F \left( (B_t)_{0 \leq t \leq T} \right) \mid
B_T^{\ast} =0_{\mathbb{G}_{N} (\mathbb{R}^d )} \right)=\mathbb{E}
\left( F \left( (P^N_{t,T})_{0 \leq t \leq T} \right) \right);
\]
\item  $(P^N_{t,T})_{0 \leq t \leq T}$ is a semimartingale up to time $T$.
\end{enumerate}
\end{proposition}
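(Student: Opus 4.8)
The plan is to realize $(P^N_{t,T})$ as the $\mathcal V_1$-projection of the Doob $h$-transform of the diffusion $(B^*_t)$ on $\mathbb G_N(\mathbb R^d)$ with respect to the space-time harmonic function $h(s,x)=p_{T-s}(x,0_{\mathbb G_N(\mathbb R^d)})$, $0\le s<T$, which solves $(\partial_s+\tfrac12\sum_{i=1}^dD_i^2)h=0$. By H\"ormander's theorem $h$ is smooth on $[0,T)\times\mathbb G_N(\mathbb R^d)$, and, the $D_i$ being bracket-generating, $p_t>0$ everywhere (Chow's theorem together with the Stroock--Varadhan support theorem), so $\log h$ is smooth there. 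A short computation identifies the $h$-transform of $\tfrac12\sum_iD_i^2$ with $\tfrac12\sum_iD_i^2+\sum_i(D_i\log h)D_i$, hence, by Girsanov's theorem, the $h$-transformed diffusion solves on $[0,T)$ the Stratonovich equation
\[
dP^{N,*}_{t,T}=\sum_{i=1}^dD_i(P^{N,*}_{t,T})\circ dB^i_t+\sum_{i=1}^dD_i(P^{N,*}_{t,T})\,D_i\log p_{T-t}\big(P^{N,*}_{t,T},0_{\mathbb G_N(\mathbb R^d)}\big)\,dt .
\]
Since brackets strictly raise the grading, the $\mathcal V_1$-component of each $D_i$ carries no higher-degree correction, i.e. $D_jx_i=\delta_{ij}$ for $i,j\le d$ in the chosen Hall coordinates (equivalently, the $\mathcal V_1$-part of $\ln\tilde B_t$ is exactly $\sum_iU_iB^i_t$); reading off the first $d$ coordinates of the displayed equation then gives precisely (\ref{N step loop}), and conversely (\ref{N step loop}) together with the polynomial formula for the lift recovers it.

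For existence and uniqueness I would note that these coefficients are smooth, hence locally Lipschitz, on $[0,T)\times\mathbb G_N(\mathbb R^d)$, which gives pathwise uniqueness on each $[0,T-\varepsilon]$ and a unique maximal strong solution up to an explosion time $\zeta\le T$; that $\zeta=T$ is built into the $h$-transform, since the measure $\mathbb Q_T$ on $\mathcal F_t$ ($t<T$) with $d\mathbb Q_T/d\mathbb P=p_{T-t}(B^*_t,0)/p_T(0,0)$ is a probability measure (a genuine $\mathbb P$-martingale, by harmonicity of $h$) under which $(B^*_t)_{0\le t<T}$ solves the SDE without explosion, and one defines $(P^{N,*}_{t,T})$ to have this law. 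Properties 1 and 2 are then the standard identification of an $h$-transform with a conditioned process. For Property 2, for $0\le t_1<\cdots<t_k<T$ and bounded $g_j$ the Markov property yields
\[
\mathbb E\big(g_1(B^*_{t_1})\cdots g_k(B^*_{t_k})\mid B^*_T=0\big)=\frac{\mathbb E\big(g_1(B^*_{t_1})\cdots g_k(B^*_{t_k})\,p_{T-t_k}(B^*_{t_k},0)\big)}{p_T(0,0)}=\mathbb E\big(g_1(P^{N,*}_{t_1,T})\cdots g_k(P^{N,*}_{t_k,T})\big),
\]
and a monotone class argument extends this to all bounded predictable path functionals; projecting onto the first $d$ coordinates gives the statement. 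For Property 1, the marginal of $P^{N,*}_{t,T}$ has density $x\mapsto p_t(0,x)\,p_{T-t}(x,0)/p_T(0,0)$ with respect to Haar measure, which concentrates at $0_{\mathbb G_N(\mathbb R^d)}$ as $t\uparrow T$; combined with the finiteness of the drift established below (which makes $P^{N,*}_{t,T}$ converge a.s. as $t\uparrow T$), this forces $P^{N,*}_{T,T}=0_{\mathbb G_N(\mathbb R^d)}$ a.s.

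The step I expect to be the real obstacle is Property 3: that the added drift extends to a finite-variation process on the \emph{closed} interval $[0,T]$, equivalently that $\int_0^T|D_i\log p_{T-s}(P^{N,*}_{s,T},0_{\mathbb G_N(\mathbb R^d)})|\,ds<\infty$ a.s. for $i=1,\dots,d$. As the integrand is non-negative, it suffices to bound its expectation, and by the bridge marginal above
\[
\mathbb E\int_0^T\big|D_i\log p_{T-s}(P^{N,*}_{s,T},0)\big|\,ds=\frac{1}{p_T(0,0)}\int_0^T\!\!\int_{\mathbb G_N(\mathbb R^d)}\big|D_ip_{T-s}(x,0)\big|\,p_s(0,x)\,dx\,ds .
\]
I would then feed in the classical heat-kernel estimates on a Carnot group (with $d_{CC}$ the Carnot--Carath\'eodory distance), $p_u(x,0)\le C\,u^{-Q/2}e^{-d_{CC}(x,0)^2/(Cu)}$ and, crucially, the sharp gradient bound $|D_ip_u(x,0)|\le C\,u^{-(Q+1)/2}e^{-d_{CC}(x,0)^2/(Cu)}$, where $Q=\sum_jj\dim\mathcal V_j$ is the homogeneous dimension; the dilation scaling $p_u(x,0)=u^{-Q/2}p_1(\delta_{u^{-1/2}}x,0)$ and the homogeneity of the $D_i$ make the $s\uparrow T$ and $s\downarrow0$ bookkeeping routine. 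The inner integral then contributes a factor of order $((T-s)\wedge s)^{Q/2}$ (volume growth $|B(0,r)|\le Cr^Q$), leaving $\int_0^T((T-s)^{-1/2}+s^{-1/2})\,ds<\infty$; since $(B_t)_{0\le t\le T}$ is continuous, $P^{N,i}_{t,T}=B^i_t+\int_0^tD_i\log p_{T-s}(\cdots)\,ds$ is then a semimartingale up to time $T$. The only non-routine ingredient is thus the heat-kernel input — the global lower bound $p_T(0,0)>0$ together with the Gaussian upper bounds for $p_u$ and for $|D_ip_u|$ (of Kusuoka--Stroock / Varopoulos--Saloff-Coste--Coulhon type); everything else is scaling and Fubini.
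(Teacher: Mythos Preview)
Your proposal is correct and, for existence/uniqueness and Properties 1--2, takes the same $h$-transform / conditioned-diffusion route that the paper simply cites as ``very classical'' (its references \cite{Ba}, \cite{Bi}, \cite{Fi-Pi-Yo}, \cite{Hs}); you have spelled out what the paper leaves to the literature.

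For Property 3 the two arguments diverge. The paper invokes a single pointwise estimate, $|D_i\ln p_t(g,0)|\le C/\sqrt{t}$ uniformly in $g$, taken from \cite{Jer-San} and \cite{Bon}; this bounds the drift pathwise by $C(T-s)^{-1/2}$ and the integrability $\int_0^T(T-s)^{-1/2}\,ds<\infty$ is immediate, with no expectation or Fubini step. Your argument instead controls the $L^1$ norm of the drift via the bridge marginal $p_s(0,x)p_{T-s}(x,0)/p_T(0,0)$ and the separate Gaussian upper bounds on $p_u$ and $|D_ip_u|$, then integrates out $x$ using volume growth. Both work; the paper's estimate is the stronger input (it needs matching upper and lower Gaussian bounds to control the \emph{logarithmic} derivative uniformly) but yields a one-line pathwise conclusion, whereas your version trades a bit of computation for a weaker hypothesis (only $p_T(0,0)>0$, no global lower bound on $p_u$).
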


\begin{proof}
The construction of the bridge over a given diffusion is very
classical and very general (see for example \cite{Ba}, \cite{Bi},
\cite{Fi-Pi-Yo} and \cite{Hs} p.142), so that we do not present
the details. The only delicate point in the previous statement is
the semimartingale property up to time $T$. In the elliptic case
Bismut \cite{Bi} deals with the end point singularity by proving
an estimate of the logarithmic derivatives of the heat kernel. For
the heat kernel on Carnot groups, such an estimate can directly be
obtained from \cite{Jer-San} and \cite{Bon}. Namely, for $g \in
\mathbb{G}_N (\mathbb{R}^d)$, $t>0$,
\begin{equation*}
\mid D_{i} \ln p_t (g,0) \mid \leq \frac{C}{\sqrt{t}}
\end{equation*}
where $C>0$.
Now, to prove that $(P^{N,*}_{t,T})_{0
\leq t \leq T}$ is a semimartingale up to time $T$, we need to
show that for any $1 \leq i \leq d$,
\begin{equation*}
\int_0^T \mid D_i \ln p_{T-s} (P^{N,*}_{s,T},0) \mid ds < + \infty
\end{equation*}
with probability 1, which follows therefore from the above
estimate.

\end{proof}

The semimartingale $(P^N_{t,T})_{0
\leq t \leq T}$  shall be called a Brownian loop of step $N$.

\begin{example}
The process $(P^1_{t,T})_{0 \leq t \leq T}$ is simply the
$d$-dimensional Brownian bridge from $0$ to $0$ with length $T$.
\end{example}

\begin{example}
The process $(P^2_{t,T})_{0 \leq t \leq T}$ is the $d$-dimensional
standard Brownian motion $(B_t)_{0 \leq t \leq T}$ conditioned by
$(B_T , \wedge B_T)=0$.
\end{example}

\begin{remark}
Notice that in law,
\begin{equation}  \label{scaling}
( P^N_{t,T})_{0 \leq t \leq T}=(\sqrt{T} P^N_{\frac{t}{T},1})_{0
\leq t \leq T}.
\end{equation}
\end{remark}
\subsection{SDEs Driven by N-Step
Brownian Loops}

Consider now on $\mathbb{R}^n$ stochastic differential equations
of the type
\begin{equation}  \label{SDEbridgestep}
X_t =x_0 + \sum_{i=1}^d \int_0^t V_i (X_s) \circ dP^{i,N}_{s,T},
\text{ } t \leq T
\end{equation}
where:

\begin{enumerate}
\item  $x_0 \in \mathbb{R}^n$;

\item  $V_1,...,V_d$ are $C^{\infty}$ bounded vector fields on $\mathbb{R}^n$
;

\item  $(P^{1,N}_{t,T},...,P^{d,N}_{t,T})_{0 \leq t \leq T}$ is a $d$
-dimensional $N$-step Brownian loop from $0$ to $0$ with length
$T>0$.
\end{enumerate}

\begin{proposition}
For every $x_0 \in \mathbb{R}^n$, there is a unique solution
$(X_t^{x_0})_{0 \leq t \leq T}$ to (\ref{SDEbridgestep}). Moreover
there exists a stochastic flow $(\Phi_t, 0 \leq t \leq T)$ of
smooth diffeomorphisms $\mathbb{R}^n \rightarrow \mathbb{R}^n$
associated to the equations (\ref{SDEbridgestep}).
\end{proposition}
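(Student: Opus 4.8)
The plan is to reduce the statement to the general theory of stochastic differential equations driven by continuous semimartingales, exactly as was done for the analogous Proposition attached to (\ref{SDEbridge}). The only new ingredient needed, compared with the Brownian bridge case $N=1$, is that the driving process $(P^{1,N}_{t,T},\dots,P^{d,N}_{t,T})_{0\le t\le T}$ is a continuous semimartingale \emph{up to} time $T$; but this is precisely property (3) of the previous Proposition, obtained there from the bound $|D_i\ln p_t(g,0)|\le C/\sqrt{t}$ on the heat kernel of $\mathbb{G}_N(\mathbb{R}^d)$. Thus the decomposition $P^{i,N}_{t,T}=B^i_t+A^i_t$ holds on the whole closed interval $[0,T]$, where $B$ is a $d$-dimensional Brownian motion and $A^i_t=\int_0^t D_i\ln p_{T-s}(P^{N,*}_{s,T},0)\,ds$ is a continuous adapted process whose total variation on $[0,T]$ is almost surely finite.

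First I would invoke Kunita's existence and uniqueness theorem for SDEs driven by a general $d$-dimensional continuous semimartingale with $C^\infty$ coefficients having bounded derivatives of all orders (Theorem 3.4.1, p.~101 of \cite{Kun}): since the $V_i$ satisfy these hypotheses, there is a unique solution $(X^{x_0}_t)_{0\le t\le T}$ for every $x_0\in\mathbb{R}^n$. Then I would invoke Kunita's flow theorem (Theorem 4.6.5, p.~173 of \cite{Kun}): the family $x_0\mapsto X^{x_0}_t$ admits a modification $(\Phi_t)_{0\le t\le T}$ which is, for each $t$, a $C^\infty$ diffeomorphism of $\mathbb{R}^n$, with $t\mapsto\Phi_t$ continuous. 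Both results are stated there for semimartingales on a fixed time interval, here $[0,T]$, so once the semimartingale property up to $T$ is in hand nothing beyond a direct citation is required.

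The single point deserving a word of care is the endpoint $t=T$, where the time-dependent drift coefficient of $P^N$ blows up pointwise. This is absorbed by the a.s. integrability $\int_0^T|D_i\ln p_{T-s}(P^{N,*}_{s,T},0)|\,ds<\infty$ already established, which guarantees that $A^i$ has finite total variation on the \emph{closed} interval $[0,T]$, so that $(P^N_{t,T})_{0\le t\le T}$ is genuinely a continuous semimartingale on $[0,T]$ and Kunita's statements apply verbatim. Alternatively one may solve (\ref{SDEbridgestep}) on each $[0,T-\varepsilon]$, glue the flows by uniqueness, and pass to the limit $\varepsilon\to0$, the limit existing because the driving semimartingale converges on $[0,T]$. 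I do not expect any genuine obstacle: the content of the proposition is entirely carried by the semimartingale property proved in the preceding Proposition, and the rest is a reference to Kunita.
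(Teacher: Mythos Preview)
Your proposal is correct and follows exactly the approach the paper intends: the paper does not even write out a proof for this proposition, tacitly leaving it to the reader since it is identical to the proof of the analogous Proposition for (\ref{SDEbridge}) in Section~2, which consists of the same citation to Kunita's Theorems 3.4.1 and 4.6.5. Your explicit remark that the only new input is the semimartingale property of $(P^N_{t,T})_{0\le t\le T}$ up to time $T$, established in the immediately preceding Proposition, is precisely the point.
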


We consider now the following family of operators
$(\mathcal{H}^N_T)_{T \geq 0}$ defined on the space of compactly
supported smooth functions $f:\mathbb{R}^n \rightarrow \mathbb{R}$
by
\[
(\mathcal{H}^N_T f)(x) =\mathbb{E}\left( f(X_{T}^{x})\right),
\text{ }x \in \mathbb{R}^n.
\]
The operator $\mathcal{H}^N_T$ shall be called the depth $N$
holonomy operator.
\begin{remark}
Of course, as for $N=1$ which is the case treated in section 2,
the operator $\mathcal{H}_T^{N}$ does not satisfy a semi-group
property.
\end{remark}

A relevant intrinsic property of $\mathcal{H}^N_T$ is the following:

\begin{proposition}
$\mathcal{H}^N_T$ does not depend on the particular free Carnot group $\mathbb{G}_N (\mathbb{R}^d)$.
\end{proposition}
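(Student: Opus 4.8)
The plan is to use the intrinsic characterization of the $N$-step Brownian loop provided by property (2) of the Proposition defining $(P^N_{t,T})_{0\le t\le T}$: in law, $P^N$ is the driving Brownian motion $(B_t)_{0\le t\le T}$ conditioned on the event that its lift returns to the identity at time $T$. The whole point is then that this event, once read off on the underlying Wiener path, does not change when one free Carnot group of step $N$ over $\mathbb{R}^d$ is replaced by another.

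First I would recall that the Lie algebra of a free Carnot group of step $N$ over $\mathbb{R}^d$, equipped with a basis $(U_1,\dots,U_d)$ of $\mathcal{V}_1$, is a copy of the free step-$N$ nilpotent Lie algebra on the $d$ free generators $U_1,\dots,U_d$. Hence, given two such data $(\mathbb{G},(U_i))$ and $(\mathbb{G}',(U_i'))$, the universal property of free nilpotent Lie algebras yields a unique Lie algebra isomorphism sending $U_i$ to $U_i'$, and exponentiating gives a Lie group isomorphism $\phi\colon\mathbb{G}\to\mathbb{G}'$ with $\phi_*U_i=U_i'$ (as left-invariant vector fields) and $\phi(0_{\mathbb{G}})=0_{\mathbb{G}'}$. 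Since the $U_i$ and $U_i'$ are $\phi$-related, the Stratonovich change of variables applied to the defining equation $d\tilde B_t=\sum_i U_i(\tilde B_t)\circ dB^i_t$ shows that $(\phi(\tilde B_t))_{t\ge0}$ solves the corresponding equation in $\mathbb{G}'$; by uniqueness $\phi(\tilde B_t)=\tilde B'_t$, the lift of $B$ in $\mathbb{G}'$. In particular $\{\tilde B_T=0_{\mathbb{G}}\}=\{\tilde B'_T=0_{\mathbb{G}'}\}$ as events in the underlying probability space, so the regular conditional law of $(B_t)_{0\le t\le T}$ given ``the lift comes back to the origin at time $T$'' is the same for both realizations. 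By property (2) of the Proposition, this conditional law is precisely the law of $(P^N_{t,T})_{0\le t\le T}$; therefore the law on path space of the $N$-step Brownian loop is independent of the chosen realization of the free Carnot group.

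Finally, by Kunita's construction of the stochastic flow, the solution $X^{x}_T$ of (\ref{SDEbridgestep}) is a fixed measurable functional of the driving path $(P^N_{t,T})_{0\le t\le T}$, so $\mathcal{H}^N_Tf(x)=\mathbb{E}(f(X^x_T))$ depends only on the law of that path; combined with the previous paragraph this gives the claim. The only genuinely delicate point is the interplay between conditioning on a null set and the change of realization, but property (2) already packages the regular conditional law into the very definition of $P^N_{t,T}$, so all that is really needed is the elementary equivariance $\phi(\tilde B)=\tilde B'$. As an alternative one could argue straight from (\ref{N step loop}): $\phi$ intertwines the sub-Laplacians $\sum_iD_i^2$ on the two groups, hence their heat semigroups, hence (up to the constant Jacobian of the group automorphism, which drops out of logarithmic derivatives) the drift terms $D_i\ln p_{T-s}(\cdot,0)$ match under $\phi$, forcing the two versions of $P^N$ to coincide pathwise; this is essentially the same computation in a less intrinsic dress.
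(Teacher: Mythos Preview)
Your proof is correct and follows essentially the same route as the paper: construct the canonical isomorphism $\phi$ between any two free step-$N$ Carnot groups over $\mathbb{R}^d$, observe that $\phi$ intertwines the lifts of $B$, deduce that the conditioning events $\{\tilde B_T=0\}$ coincide, and conclude that the law of the $N$-step loop (hence $\mathcal{H}^N_T$) is independent of the realization. The paper simply invokes an external reference for the isomorphism and the relation $\tilde B=\phi(B^*)$, whereas you supply these via the universal property and the Stratonovich change of variables, and you also make explicit (via Kunita) the passage from equality of driving laws to equality of $\mathcal{H}^N_T$; your alternative argument through the heat-kernel drift in (\ref{N step loop}) is an extra that the paper does not pursue.
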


\begin{proof}
Consider the stochastic differential equation
\begin{equation}  \label{SDEbridgestep}
X_t =x_0 + \sum_{i=1}^d \int_0^t V_i (X_s) \circ d\tilde{P}^{i,N}_{s,T},
\text{ } t \leq T
\end{equation}
where $(\tilde{P}^{1,N}_{t,T},...,\tilde{P}^{d,N}_{t,T})_{0 \leq t \leq T}$ generates a loop in a free Carnot group $\mathbb{G}$ of step N.
Let $(B_t)_{t \geq 0}$ denote a $d$-dimensional standard Brownian motion, and let $(B^*_t)_{t \geq 0}$
(resp. $(\tilde{B}_t)_{t \geq 0}$) denote a lift in  $\mathbb{G}_N (\mathbb{R}^d)$ (resp. $\mathbb{G}$).
Thanks to the proposition 2.10 of \cite{Baubook}, there exists a Carnot group isomorphism
\[
\phi: \mathbb{G}_N (\mathbb{R}^d)  \rightarrow \mathbb{G}
\]
such that $\tilde{B}=\phi (B^*)$.
Therefore almost surely, $\tilde{B}_T=0$ is equivalent to $B^*_T=0$ and thus
\[
(\tilde{P}^{1,N}_{t,T},...,\tilde{P}^{d,N}_{t,T})_{0 \leq t \leq T}=^{law}(P^{1,N}_{t,T},...,P^{d,N}_{t,T})_{0 \leq t \leq T}.
\]
\end{proof}

\begin{theorem}
\label{Theogeneral} Let $f:\mathbb{R}^n \rightarrow \mathbb{R}$ be
a smooth, compactly supported function. In $\mathbf{L}^2$,
\[
\lim_{T \rightarrow 0} \frac{\mathcal{H}^{N}_T f -f}{T^{N+1}}
=\Delta_{N} f,
\]
where $\Delta_{N}$ is a second order differential operator.
\end{theorem}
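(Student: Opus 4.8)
The plan is to expand $f(X_T^x)$ using the It\^o–Stratonovich/Chen machinery and to exploit the scaling relation \eqref{scaling}, $(P^N_{t,T})_{0\le t\le T} \stackrel{law}{=} (\sqrt{T}\,P^N_{t/T,1})_{0\le t\le T}$, to extract the correct power $T^{N+1}$. First I would write the stochastic flow $\Phi_T$ via the Chen–Strichartz development recalled in the introduction; because $(P^N_{t,T})$ is a Brownian motion conditioned so that the antisymmetrized iterated integrals of order $\le N$ vanish at time $T$, the development reduces to
\[
\Phi_{T}=\exp\left(\sum_{k\ge N+1}\sum_{i_1,\dots,i_k} F_{i_1,\dots,i_k}\Big(\int_{0\le t_1\le\dots\le t_k\le T}\circ dP^{i_1}_{t_1}\cdots\circ dP^{i_k}_{t_k}\Big)_{anti}\right),
\]
so that the leading operator acting on $f$ is governed by the degree-$(N+1)$ terms, i.e.\ by elements of $\mathfrak{L}^{N+1}$ realized through the universal Lie polynomials $F_{i_1,\dots,i_k}$. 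The main point is that the lowest-order correction to $f(X_T^x)-f(x)$ comes from (a) the first-order part of the $(N+1)$-fold iterated integral term, which has zero conditional expectation, and (b) the second-order part, which contributes a nonzero expectation of order $T^{N+1}$; terms of degree $>N+1$ in the development are negligible after taking expectations.

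Next I would make the scaling explicit. Under $P^N_{t,T}=\sqrt{T}\,P^N_{t/T,1}$, a $k$-fold iterated Stratonovich integral scales like $T^{k/2}$, so the degree-$(N+1)$ antisymmetrized integral has magnitude $T^{(N+1)/2}$; its square — which is what survives in expectation once we expand $\exp(\cdots)$ and apply $\mathbb{E}$ — is of order $T^{N+1}$. Concretely, writing $\mathcal{A}^{N+1}_T$ for the vector of normalized degree-$(N+1)$ iterated integrals, one has $\mathbb{E}(\mathcal{A}^{N+1}_T)=0$ by the symmetry of the conditioned loop (the loop is symmetric under time reversal combined with sign flip), and $\mathbb{E}(\mathcal{A}^{N+1}_T\otimes \mathcal{A}^{N+1}_T)=T^{N+1}\,Q$ for a fixed covariance tensor $Q$ depending only on $N$ and $d$. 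Plugging into $\mathbb{E}(f(X_T^x))=\mathbb{E}\big((\exp(\sum\cdots)f)(x)\big)$, expanding the exponential to second order and applying these moment identities yields
\[
\mathbb{E}(f(X_T^x))-f(x)=T^{N+1}\,(\Delta_N f)(x)+o(T^{N+1}),
\]
where $\Delta_N$ is a fixed second-order differential operator, quadratic in the $F_{i_1,\dots,i_k}$ of degree $N+1$, hence lying in the universal enveloping algebra of $\mathfrak{L}^{N+1}$. Turning this into an $\mathbf{L}^2$ statement for compactly supported smooth $f$ requires uniform-in-$x$ control of the remainder, which follows from the boundedness of the $V_i$ and their derivatives together with moment bounds on the loop; I would get the latter from the logarithmic-gradient estimate $|D_i\ln p_t(g,0)|\le C/\sqrt{t}$ already used to prove the semimartingale property, which gives $\mathbf{L}^p$ bounds on $P^N_{t,T}$ uniform on $[0,T]$ after rescaling.

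The constant-$\frac{1}{24}$ computation in the $N=1$ two-step nilpotent case (done via $\det(T\Omega/2\sinh(\tfrac12 T\Omega))^{1/2}$) then serves as a consistency check and as the way to pin down the universal constant, exactly as indicated in the proof of Theorem 2.4. The hard part will be two-fold: first, justifying rigorously that the higher-degree terms in the Chen development — which a priori only converges formally — can be truncated with an error that is genuinely $o(T^{N+1})$ in $\mathbf{L}^2$ uniformly in $x$, for which I would combine a quantitative Taylor expansion of the flow (à la Kunita / Ben Arous) with the rescaling and the heat-kernel gradient estimate to control all the relevant iterated-integral moments; second, checking the vanishing of the order-$T^{(N+1)/2}$ term, i.e.\ $\mathbb{E}(\mathcal{A}^{N+1}_T)=0$, which I expect to follow from a reversibility/symmetry property of the $N$-step loop (the law of $(P^N_{t,T})$ is invariant under $t\mapsto T-t$ composed with a global sign change, because the conditioning event and the Brownian measure both have this symmetry), but which must be verified carefully against the precise normalization of the antisymmetrized integrals.
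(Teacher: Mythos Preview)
Your proposal is correct and follows essentially the same route as the paper: Chen--Strichartz expansion of the flow, truncation of the terms of degree $\le N$ by definition of the $N$-step loop, scaling $(P^N_{t,T})\stackrel{law}{=}(\sqrt{T}\,P^N_{t/T,1})$ to identify orders in $T$, vanishing of $\mathbb{E}\bigl(\Lambda_I(P^N_{.,T})_T\bigr)$ by symmetry, and a second-order expansion of the exponential to extract $\Delta_N$ at order $T^{N+1}$. The paper makes the truncation order explicit (it keeps $k=N+1,\dots,2N+2$ with a remainder of size $T^{(2N+3)/2}$, citing Strichartz and Castell for the quantitative bound) and simply asserts the symmetry $\mathbb{E}(\Lambda_I)=0$ without elaborating, whereas you correctly flag that this step deserves care; otherwise the arguments coincide.
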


\begin{proof}
Before we start the proof, let us precise some notations we
already used. If $I=(i_1,...,i_k) \in \{ 1,..., d \}^k$ is a word,
we denote $\mid I \mid=k $ its length and by $V_I$ the commutator
defined by
\[
U_I = [U_{i_1},[U_{i_2},...,[U_{i_{k-1}}, U_{i_{k}}]...].
\]
The group of permutations of the index set $\{1,...,k\}$ is
denoted $\mathfrak{S}_k$. If $\sigma \in \mathfrak{S}_k$, we
denote $e(\sigma)$ the cardinality of the set
\[
\{ j \in \{1,...,k-1 \} , \sigma (j) > \sigma(j+1) \}.
\]
Finally, we denote
\[
\Lambda_I (P^N_{.,T})_t=\sum_{\sigma \in \mathfrak{S}_k}
\frac{\left( -1\right) ^{e(\sigma )}}{k^{2}\left(
\begin{array}{l}
k-1 \\
e(\sigma )
\end{array}
\right) } \int_{0 \leq t_1 \leq ... \leq t_k \leq t} \circ
dP^{N,\sigma^{-1}i_1}_{t_1,T} \circ ... \circ
dP^{N,\sigma^{-1}i_k}_{t_k,T}.
\]
Due to the scaling property
\begin{equation*}
( P^N_{t,T})_{0 \leq t \leq T}=(\sqrt{T} P^N_{\frac{t}{T},1})_{0
\leq t \leq T},
\end{equation*}
we can closely follow the article of Strichartz \cite{Stri} (see
also   Castell \cite{Cast}), to obtain
the following asymptotic development of $f(X_T^x)$:
\[
f(X_T^x)=\left( \exp \left( \sum_{k = 1}^{2N+2}
\sum_{I=(i_1,...,i_k)} \Lambda_I (P^N_{.,T})_T V_I \right) f
\right) (x) +T^{\frac{2N+3}{2}} \mathbf{R}_{2N+3} (T,f,x),
\]
where the remainder term satisfies when $T \rightarrow 0$,
\[
\sup_{x\in\mathbb{R}^{n}}\sqrt{\mathbb{E} \left( \mathbf{R}_{2N+3}
(T,f,x)^{2}\right)}\leq C\,
\]
for some non negative constant $C$. By definition of $(
P^N_{t,T})_{0 \leq t \leq T}$, we actually have
\[
\sum_{k = 1}^{2N+2} \sum_{I=(i_1,...,i_k)} \Lambda_I (P^N_{.,T})_T
V_I  =\sum_{k = N+1}^{2N+2}\sum_{I=(i_1,...,i_{k})} \Lambda_I
(P^N_{.,T})_T V_I,
\]
so that
\[
f(X_T^x)=\left( \exp \left( \sum_{k =
N+1}^{2N+2}\sum_{I=(i_1,...,i_{k})} \Lambda_I (P^N_{.,T})_T V_I
\right) f \right) (x) +T^{\frac{2N+3}{2}} \mathbf{R}_{2N+3}
(T,f,x).
\]
Therefore, since $f$ is assumed to be compactly supported
\[
\mathcal{H}_T^N f (x)=\mathbb{E} \left( \left( \exp \left( \sum_{k
= N+1}^{2N+2} \sum_{I=(i_1,...,i_{k})} \Lambda_I (P^N_{.,T})_T V_I
\right) f \right) (x) \right) +T^{\frac{2N+3}{2}}
\tilde{\mathbf{R}}_{2N+3} (T,f,x),
\]
where
\[
\tilde{\mathbf{R}}_{2N+1} (T,f,x) = \mathbb{E} \left(
\mathbf{R}_{2N+1} (T,f,x) \right).
\]
Since, by symmetry, we always have
\[
\mathbb{E} \left( \Lambda_I (P^N_{.,T})_T  \right)=0,
\]
we have to go at the order 2 in the asymptotic development of the
exponential when $T \rightarrow 0$. By neglecting the terms which
have order more than $T^{\frac{2N+3}{2}}$, we obtain
\[
\mathcal{H}_T^N f (x) =f(x)+ \sum_{
\begin{array}{l}
I=(i_{1},...,i_{N+1}) \\
J=(j_{1},...,j_{N+1})
\end{array}
} \frac{1}{2} \mathbb{E} \left( \Lambda_I (P^N_{.,T})_T \Lambda_J
(P^N_{.,T})_T \right) (V_I V_J f)(x)
+T^{\frac{2N+3}{2}}\mathbf{R}_{2N+3}^{\ast}(T,f,x),
\]
where the remainder term $\mathbf{R}_{2N+3}^{\ast}(T,f,x)$ is
bounded in $\mathbf{L}^2$ when $T \rightarrow 0$. This leads to the expected result.
\end{proof}

\begin{example}
We have
\[
\Delta_0=\frac{1}{2} \sum_{i=1}^d V_i^2,
\]
and, as already seen in section 2,
\[
\Delta_1=\frac{1}{24} \sum_{1 \leq i < j \leq d} [V_i,V_j]^2.
\]
\end{example}

We now have the following generalization of Theorem \ref{Holonomie
step2} and Theorem \ref{Hormander step2}:

\begin{theorem}
\label{Theoreme general} Assume that $\mathfrak{L}^{N+1} = 0$,
then for any solution $(X_t^{x_0})_{0 \leq t \leq T}$ of
(\ref{SDEbridgestep}) we have
almost surely $X_T^{x_0} = x_0$. On the other hand, assume that $\mathfrak{L}%
^{N+1} (x_0)=\mathbb{R}^n$, then for the solution $(X_t^{x_0})_{0
\leq t \leq T}$ of (\ref{SDEbridgestep}) the random variable
$X_T^{x_0}$ has a smooth density with respect to the Lebesgue
measure of $\mathbb{R}^n$.
\end{theorem}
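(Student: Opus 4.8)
The plan is to derive both halves of the statement as exact counterparts of the $N=1$ arguments given for Theorem~\ref{Holonomie step2} and Theorem~\ref{Hormander step2}, with the $N$-step Brownian loop $(P^N_{.,T})$ replacing the Brownian bridge and the iterated H\"ormander theorem of Section~3 replacing the classical one.

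For the first assertion, suppose $\mathfrak{L}^{N+1}=0$. The key point is that $\mathfrak{L}$ is then nilpotent of step at most $N$, so the Chen--Strichartz development of the flow $(\Phi_t)_{0\leq t\leq T}$ of (\ref{SDEbridgestep}) is a genuine finite identity
\[
\Phi_T=\exp\Bigl(\sum_{k\geq 1}\sum_{I=(i_1,\dots,i_k)}\Lambda_I(P^N_{.,T})_T\,V_I\Bigr),
\]
and I would show that every term of the exponent vanishes: a term with $|I|=k\geq N+1$ is zero because $V_I\in\mathfrak{L}^{k}\subseteq\mathfrak{L}^{N+1}=0$, while a term with $|I|=k\leq N$ is zero because the defining property of the $N$-step loop (equivalently $P^{N,*}_{T,T}=0_{\mathbb{G}_N(\mathbb{R}^d)}$, as already recorded in the proof of Theorem~\ref{Theogeneral}) forces $\Lambda_I(P^N_{.,T})_T=0$ almost surely for every word of length at most $N$. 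Hence $\Phi_T=\mathrm{Id}$ and $X_T^{x_0}=x_0$ almost surely. One could instead reproduce verbatim the composition-of-flows computation of Theorem~\ref{Holonomie step2} after choosing a basis of $\mathfrak{L}$ adapted to its lower central series, but the terminating Chen--Strichartz series is shorter.

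For the second assertion, suppose $\mathfrak{L}^{N+1}(x_0)=\mathbb{R}^n$. I would introduce an auxiliary $d$-dimensional Brownian motion $(B_t)_{t\geq 0}$, let $(Y_t)_{t\geq 0}$ solve (\ref{SDE}) with $Y_0=x_0$, and let $(B^*_t)_{t\geq 0}$ be its lift in $\mathbb{G}_N(\mathbb{R}^d)$. The iterated H\"ormander theorem of Section~3 then yields that $(Y_T,B^*_T)$ has a smooth density $q$ on $\mathbb{R}^n\times\mathbb{G}_N(\mathbb{R}^d)$. Since $\frac12\sum_i D_i^2$ is hypoelliptic and $(B^*_t)$ is a diffusion issued from the origin, the law of $B^*_T$ has a smooth, everywhere positive density $r$ (support theorem), so the conditional law of $Y_T$ given $B^*_T=0_{\mathbb{G}_N(\mathbb{R}^d)}$ has the smooth density $y\mapsto q(y,0)/r(0)$ on $\mathbb{R}^n$. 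Finally, by the proposition constructing the $N$-step loop, the path $(P^N_{t,T})_{0\leq t\leq T}$ is distributed as $(B_t)_{0\leq t\leq T}$ conditioned on $B^*_T=0$; since $X_T^{x_0}$ is produced from its driving path by the same solution map as $Y_T$, the variable $X_T^{x_0}$ has the law of $Y_T$ conditioned on $B^*_T=0$, hence the smooth density $q(\cdot,0)/r(0)$.

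I expect the substantive difficulty to be confined to the iterated H\"ormander theorem of Section~3, which is already available; granting it, the only point still requiring care is the identification of the loop-driven solution with the conditioned diffusion, i.e.\ that conditioning on $\{B^*_T=0\}$ commutes with the It\^o--Stratonovich solution map. This is handled exactly as in the $N=1$ case and can be made fully rigorous through the continuity of that map in a suitable (e.g.\ rough-path) topology. Everything else --- terminating the Chen--Strichartz series, passing from a joint smooth density to a conditional one, and positivity of the $B^*_T$-marginal at the origin --- is routine.
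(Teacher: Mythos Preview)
Your proposal is correct and follows essentially the same route as the paper. For the first assertion the paper phrases the argument as the existence of a smooth map $F:\mathbb{R}^n\times\mathbb{G}_N(\mathbb{R}^d)\to\mathbb{R}^n$ with $X_t^{x_0}=F(x_0,P^{N,*}_{t,T})$, which is exactly the content of your terminating Chen--Strichartz identity; for the second assertion the paper proceeds verbatim as you do, invoking Theorem~\ref{Hormander it�r�} for the joint density of $(Z_T,B_T^{\ast})$ and then conditioning on $B_T^{\ast}=0$, citing \cite{Ben} for the positivity of the marginal density at the origin.
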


\begin{proof}
Assume that $\mathfrak{L}^{N+1} = 0$, then there exists a smooth
map
\[
F: \mathbb{R}^n \times \mathbb{G}_{N} (\mathbb{R}^d) \rightarrow
\mathbb{R}^n
\]
such that, for $x_0 \in \mathbb{R}^n$, the solution
$(X_t^{x_0})_{0 \leq t \leq T}$ of the SDE (\ref{SDEbridgestep})
can be written
\[
X_t^{x_0}=F(x_0, Q^N_{t,T} ),
\]
which implies immediately the expected result.

Assume now that $\mathfrak{L} ^{N+1} (x_0)=\mathbb{R}^n$. Let us
consider  the solution $(Z_t)_{t \geq 0}$ of the following
stochastic differential equation:
\begin{equation*}
Z_t =x_0 + \sum_{i=1}^d \int_0^t V_i (Z_s) \circ dB^i_s, \text{ }
t \geq 0,
\end{equation*}
where $(B^1_{t},...,B^d_{t})_{ t \geq 0}$ is a $d$-dimensional
standard  Brownian motion. From Theorem \ref{Hormander itéré}, the
random variable
\begin{equation*}
(Z_T,B_T^{\ast})
\end{equation*}
has a smooth density with respect to any Lebesgue measure on
$\mathbb{R}^n \times \mathbb{G}_{d,N}$. It implies in the same way
as in the proof of Theorem \ref{Hormander step2} that $X_T^{x_0}$
has a density with respect to the Lebesgue measure because the
density of $B_T^{\ast}$ does not vanish at $0$ (see \cite{Ben}).

\end{proof}

In the case of the existence of a density for $X_T^{x_0}$, we can
moreover give an equivalent of this density when the length of the
loop tends to 0. To this end, let us precise some notations.

We set for $x \in \mathbb{R}^n$ and $k \geq N$,
\[
\mathcal{U}_k (x)= \mathbf{span} \{ V_I , \text{ } N \leq \mid I
\mid \leq k \}.
\]
In the case where $\mathfrak{L}^{N+1} (x)=\mathbb{R}^n$, if $k$ is
big enough then $\mathcal{U}_k (x)= \mathbb{R}^n$. We denote
$d(x)$ the smallest integer $k \geq N+1$ for which this equality
holds and define the graded dimension
\[
\dim_{\mathcal{H}} \mathfrak{L}^{N+1} (x) :=\sum_{k=N+1}^{d(x)} k
\left( \dim \mathcal{U}_{k} (x) - \dim \mathcal{U}_{k-1}(x)
\right).
\]

\begin{theorem}
Assume that for any $x \in \mathbb{R}^n$, $\mathfrak{L}^{N+1}
(x)=\mathbb{R} ^n$. Let us denote $p_T(x)$ the density of $X_T^x$
with respect to the Lebesgue measure. We have
\[
p_T (x) \sim_{T \rightarrow 0} \frac{m (x)}{T^{\frac{\dim_{\mathcal{H}} %
\mathfrak{L}^{N+1} (x)}{2}}},
\]
where $m$ is a smooth non negative function.
\end{theorem}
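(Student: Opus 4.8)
The plan is to combine the small-time asymptotics from Theorem \ref{Theogeneral} with the scaling property (\ref{scaling}) and a Malliavin-calculus analysis of the Malliavin matrix of $X_T^x$. First I would use the scaling relation $(P^N_{t,T})_{0 \leq t \leq T} =^{\text{law}} (\sqrt{T}\, P^N_{t/T,1})_{0 \leq t \leq T}$ to reduce everything to the loop of length $1$: writing the solution flow via the Chen--Strichartz expansion as in the proof of Theorem \ref{Theogeneral}, the iterated integral $\Lambda_I(P^N_{.,T})_T$ carries a factor $T^{|I|/2}$, and since only words of length $|I| \geq N+1$ survive by the very definition of the $N$-step loop, the leading contributions to $X_T^x$ come from the $V_I$ with $|I| = N+1, N+2, \dots$, weighted by $T^{(N+1)/2}, T^{(N+2)/2}, \dots$. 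This is exactly the kind of graded/anisotropic scaling that underlies Rothschild--Stein type estimates, and it suggests introducing, for each $x$, the dilation on $\mathbb{R}^n$ that acts by $T^{-k/2}$ on the graded piece $\mathcal{U}_k(x)/\mathcal{U}_{k-1}(x)$.

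Next I would make a (local, $x$-dependent) change of coordinates adapting $\mathbb{R}^n$ to the filtration $\mathcal{U}_{N+1}(x) \subset \mathcal{U}_{N+2}(x) \subset \dots \subset \mathcal{U}_{d(x)}(x) = \mathbb{R}^n$, so that the graded dimension $\dim_{\mathcal{H}} \mathfrak{L}^{N+1}(x) = \sum_{k=N+1}^{d(x)} k\,(\dim \mathcal{U}_k(x) - \dim \mathcal{U}_{k-1}(x))$ is precisely the homogeneous dimension of the rescaling. In these coordinates, the rescaled variable $\delta_{1/\sqrt{T}}(X_T^x - x)$ should converge in law, as $T \to 0$, to a nondegenerate random variable $\xi(x)$ — the solution at time $1$ of the model (nilpotent, homogeneous) SDE driven by the length-$1$ loop, associated with the frozen graded vector fields at $x$. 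The Jacobian of the dilation is $T^{-D(x)/2}$ with $D(x) = \dim_{\mathcal{H}} \mathfrak{L}^{N+1}(x)$, so the density of $X_T^x$ at $x$ behaves like $T^{-D(x)/2}$ times the density of $\xi(x)$ evaluated at $0$, and one sets $m(x)$ to be that limiting model density at the origin. Smoothness of $m$ follows because the model SDE depends smoothly on $x$ and, by Theorem \ref{Hormander it\`er\`e} applied to the coupled diffusion $(Z_t, B_t^\ast)$, its Malliavin matrix is invertible with inverse moments locally bounded uniformly in $x$.

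The technical backbone is uniform (in $T$ and locally in $x$) control of the Malliavin matrix of the rescaled variable: one needs to show the rescaled Malliavin covariance matrix $\delta_{1/\sqrt T}\, \gamma_{X_T^x}\, \delta_{1/\sqrt T}$ stays bounded and has inverse moments bounded away from $0$ as $T \to 0$, which is what turns the convergence in law into convergence of densities together with the exact power of $T$. The main obstacle I anticipate is precisely this uniform nondegeneracy estimate at the endpoint of the loop: the loop's drift $D_i \ln p_{T-s}(\cdot,0)$ is singular as $s \to T$ (one only has $|D_i \ln p_t(g,0)| \leq C/\sqrt t$), so one must show that this endpoint singularity, after the anisotropic rescaling, does not destroy the lower bound on the smallest eigenvalue of the Malliavin matrix — this is the analogue, in the iterated/graded setting, of Bismut's and Ben Arous's estimates for bridges, and it is where the bulk of the work lies. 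Once that is in hand, the equivalent $p_T(x) \sim m(x) T^{-D(x)/2}$ follows from the standard integration-by-parts representation of the density and dominated convergence.
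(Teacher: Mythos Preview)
Your plan is plausible but takes a much harder road than the paper does. The paper's argument is essentially two lines: since $(P^N_{t,T})_{0\le t\le T}$ is, in law, the Brownian motion $(B_t)_{0\le t\le T}$ conditioned on $B_T^\ast=0_{\mathbb{G}_N(\mathbb{R}^d)}$, the density of $X_T^x$ at $x$ equals the joint density of $(Z_T^x,B_T^\ast)$ at $(x,0)$ divided by the density of $B_T^\ast$ at $0$, where $Z$ solves the same SDE driven by the unconditioned Brownian motion. Both numerator and denominator are diagonal values of hypoelliptic heat kernels, so Ben~Arous's on-diagonal asymptotics apply directly: the joint kernel behaves like $\tilde m(x)\,T^{-(\dim_{\mathcal H}\mathbb{G}_N(\mathbb{R}^d)+\dim_{\mathcal H}\mathfrak{L}^{N+1}(x))/2}$ (the graded dimension of the coupled system being computed exactly as in the proof of Theorem~\ref{Hormander it�r�}), and the marginal kernel like $C\,T^{-\dim_{\mathcal H}\mathbb{G}_N(\mathbb{R}^d)/2}$. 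The ratio gives the claim with no Malliavin analysis on the loop at all.

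Your route, by contrast, attacks the loop-driven SDE head on: anisotropic rescaling adapted to the filtration $\mathcal U_k(x)$, convergence of $\delta_{1/\sqrt T}(X_T^x-x)$ to a nilpotent model, and uniform Malliavin nondegeneracy through the endpoint singularity of the bridge drift. This is essentially reproving Ben~Arous's theorem in the conditioned setting, and the obstacle you flag---controlling the inverse Malliavin matrix uniformly as $s\to T$ despite the $1/\sqrt{T-s}$ blow-up of $D_i\ln p_{T-s}$---is real and nontrivial. The payoff of your approach would be a more self-contained argument and potentially finer information (off-diagonal behavior, higher-order terms), but for the statement at hand the paper's conditioning trick sidesteps the endpoint issue entirely by transferring everything to the unconditioned diffusion, where the cited literature applies out of the box.
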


\begin{proof}
Let us, once time again, consider  the solution $(Z_t^x)_{t \geq
0}$ of the following stochastic differential equation:
\begin{equation*}
Z_t^x =x + \sum_{i=1}^d \int_0^t V_i (Z_s^x) \circ dB^i_s, \text{
} t \geq 0,
\end{equation*}
where $(B^1_{t},...,B^d_{t})_{ t \geq 0}$ is a $d$-dimensional
standard  Brownian motion. From \cite{Ben} (see also \cite{Le}) ,
the density at $(x,0)$ of the random variable $(Z_T^x,B_T^{\ast})$
behaves when $T$ goes to zero like
\begin{equation*}
\frac{\tilde{m} (x)}{T^{ \frac{ \dim_{\mathcal{H}} \mathbb{G}_{N}
(\mathbb{R}^d)+ \dim_{\mathcal{H}} \mathfrak{L}^{N+1} (x) }{2} }},
\end{equation*}
where $\dim_{\mathcal{H}}\mathbb{G}_{N}
(\mathbb{R}^d)=\sum_{j=1}^N j \dim \mathcal{V}_j$ is the graded
dimension of $\mathbb{G}_{N} (\mathbb{R}^d)$, and $\tilde{m}$ a
smooth non negative function. Always from \cite{Ben}, the density
of the random variable $Y_T$ behaves when $T$ goes to zero like
\begin{equation*}
\frac{C}{T^{ \frac{ \dim_{\mathcal{H}} \mathbb{G}_{N}
(\mathbb{R}^d) }{2} }},
\end{equation*}
where $C$ is a non negative constant. The conclusion follows
readily.
\end{proof}

\end{document}